\theoremstyle{plain}
\newtheorem{thm}{Theorem}
\newtheorem{lem}[thm]{Lemma}
\newtheorem{cor}[thm]{Corollary}
\theoremstyle{definition}
\newtheorem{defn}[thm]{Definition}
\newtheorem{remark}[thm]{Remark}
\newcommand{\xc}[1]{{\color{magenta}{\bf [ XP: }#1 {\bf ]}}}
\newcommand{\nat}{\ensuremath {\mathbb N} }
\newcommand{\mbf}[1] {\text{\boldmath$#1$}}
\newcommand{\remove}[1] {}
\newcommand{\ex} {{\bf E}}
\newcommand{\pr} {{\bf Pr}}
\newcommand{\var} {{\bf Var}}
\newcommand{\eps}{\varepsilon}
\def\st{\colon\,}
\def\NN{{\mathbb N}}
\def\OR{{\rm OR}}
\def\olr{\mathring{R}}
\def\prk{P_r^{(k)}}
\def\VEC#1#2#3{#1_{#2},\ldots,#1_{#3}}
\def\FL#1{\lfloor{#1}\rfloor}
\def\CL#1{\lceil{#1}\rceil}
\def\CH#1#2{\binom{#1}{#2}}
\def\top{{\mbf \Lmb}}
\def\Yup{\overleftarrow{Y}}
\def\PE#1#2#3{\prod_{#1=#2}^{#3}}
\def\tow{{\rm tow}}
\def\srt{\hat R_t}
\def\olsrt{\tilde R_t}
\def\hR{\hat R}
\def\esub{\subseteq}
\def\bz{{\bf V}}
\def\CHnj{\CH{[n]}j}
\def\CHnk{\CH{[n]}k}
\def\CHnjp{\CH{[n]}{j+1}}
\def\prkl{P_r^{k,\ell}}
\def\prkkm{P_r^{k,1}}
\def\prkkk{P_r^{k,k}}
\def\Lmb{\mathbf{\Lambda}}
\def\FR{\frac}
\title{On-line size Ramsey number for monotone $k$-uniform ordered paths with
uniform looseness}
\author{
Xavier P\'erez-Gim\'enez\thanks{Department of Mathematics, University of 
Nebraska--Lincoln, Lincoln, NE, USA, e-mail: \texttt{xperez@unl.edu}}
\and Pawe{\l} Pra{\l}at\thanks{Department of Mathematics, Ryerson University,
Toronto, ON, Canada, e-mail: \texttt{pralat@ryerson.ca}; Supported in part by
NSERC and Ryerson University.}
\and Douglas B. West\thanks{Departments of Mathematics, Zhejiang Normal
University, Jinhua, China, and University of Illinois, Urbana, IL, USA, e-mail:
\texttt{dwest@math.uiuc.edu}; Research supported by Recruitment Program of
Foreign Experts, 1000 Talent Plan, State Administration of Foreign Experts
Affairs, China.}
}
\date{\today}
\begin{document}
\maketitle

\vspace{-2pc}
\begin{abstract}
An {\it ordered hypergraph} is a hypergraph $H$ with a specified linear
ordering of the vertices, and the appearance of an ordered hypergraph
$G$ in $H$ must respect the specified order on $V(G)$.  In
on-line Ramsey theory, Builder iteratively presents edges that Painter
must immediately color.  The {\it $t$-color on-line size Ramsey number}
$\olsrt(G)$ of an ordered hypergraph $G$ is the minimum number of edges Builder
needs to play (on a large ordered set of vertices) to force Painter using $t$
colors to produce a monochromatic copy of $G$.  The {\it monotone tight path}
$P_r^{(k)}$ is the ordered hypergraph with $r$ vertices
whose edges are all sets of $k$ consecutive vertices.

We obtain good bounds on $\olsrt(\prk)$. Letting $m=r-k+1$ (the number of edges
in $\prk$), we prove $m^{t-1}/(3\sqrt t)\le\olsrt(P_r^{(2)})\le tm^{t+1}$.  For
general $k$, a trivial upper bound is $\CH Rk$, where $R$ is the least number of
vertices in a $k$-uniform (ordered) hypergraph whose $t$-colorings all contain
$\prk$ (and is a tower of height $k-2$).  We prove
$R/(k\lg R)\le\olsrt(\prk)\le R(\lg R)^{2+\epsilon}$, where $\epsilon$ is
any positive constant and $t(m-1)$ is sufficiently large.  Our upper
bounds improve prior results when $t$ grows faster than $m/\log m$.
We also generalize our results to $\ell$-loose monotone paths, where each
successive edge begins $\ell$ vertices after the previous edge.
\end{abstract}

\baselineskip 16pt
\section{Introduction}
Ramsey theory studies the occurrence of forced patterns in colorings.  We say
that $H$ {\it forces} $G$ and write $H\to_t G$ when every $t$-coloring of the
elements of $H$ contains a monochromatic copy of $G$.  In this paper $H$ and
$G$ are $k$-uniform hypergraphs, we color the edges of $H$, and $t\ge2$.
Ramsey's Theorem~\cite{R} implies $K_n^{(k)}\to_t G$ when $n$ is sufficiently
large, where $K_n^{(k)}$ denotes the complete $k$-uniform hypergraph with $n$
vertices.  Our problem involves several variations on this.

For any monotone parameter, we can study its least value on the (hyper)graphs
that force $G$.  Aside from the number of vertices (the classical problem), the
most-studied parameter for this is the number of edges, yielding the
{\it size Ramsey number} (proposed in~\cite{EFRS}, with early work surveyed
in~\cite{FS}).  For example, Beck~\cite{B1} solved a problem of Erd\H{o}s by
showing that the $2$-color size Ramsey number of the path $P_n$ is linear in
$n$; after improvements in \cite{Bo,DP1,Let}, the current best upper bound is
$74n$ by Dudek and Pra{\l}at~\cite{DP2}.

Another direction considers an ordered version of hypergraphs.  An
\emph{ordered hypergraph} is a hypergraph on a linearly ordered vertex set.
In the ordered sense, $H$ is a \emph{subhypergraph} of $H'$ if $H'$ contains a
copy of $H$ with the vertices appearing in the specified order.  Since a
complete ordered hypergraph contains all ordered hypergraphs with that many
vertices, Ramsey's Theorem also holds in the ordered sense.  That is, for
an ordered $k$-uniform hypergraph $G$, there exist ordered $k$-uniform
hypergraphs $H$ such that $H\to_t G$ (every $t$-coloring of $E(H)$ contains a
monochromatic copy of $G$ in the ordered sense).  Thus Ramsey numbers
and size Ramsey numbers for ordered hypergraphs are also well-defined.  Such
problems have been studied in \cite{BCKK,CFLS,CS,FPSS,MSW,MS,Mub,MubS}.

An ``on-line'' version of Ramsey theory is a game between {\it Builder} and
{\it Painter}, introduced by Beck~\cite{B3} and by Kurek and Rucinski~\cite{KR}.
In each round, Builder presents an edge that Painter must color.  When the
Ramsey value for the target $G$ is well-defined, Builder can force a
monochromatic copy of $G$ by presenting all edges of some $H$ such that
$H\to_t G$.  However, Builder may be able to use Painter's choices to force $G$
to appear sooner.  On-line Ramsey problems have been studied for the number of
edges \cite{Con,CD,CDLL,GHK,GKP,KR,Pra1,Pra2}, the genus \cite{GHK,KK,Pet}, and
the maximum degree \cite{BGKMSW,KW,Ro1,Ro2}.  The number of edges is the number
of rounds (the length of the game) and hence is the natural parameter.  It is
so natural that the on-line size Ramsey number has confusingly also been called
just the on-line Ramsey number.  Easy arguments imply that the $2$-color
on-line size Ramsey number of the path $P_n$ is at least $2n-3$ and at most
$4n-7$ (\cite{GKP}).

We study the on-line size Ramsey number of monotone tight paths.  Let $[r]$
denote $\{1,\ldots,r\}$.  The {\it monotone tight path} is the $k$-uniform
ordered hypergraph $P_r^{(k)}$ with vertex set $[r]$ whose edges are all sets
of $k$ consecutive vertices.  The vertex analogue was studied and applied
in~\cite{DLR,FPSS,MSW,MS}.

For Ramsey problems, we follow a common practice of adding a circumflex accent
($\hR$) to indicate the size Ramsey number.  Several recent papers use a tilde
accent to indicate the on-line version of the size Ramsey number (a circular
accent $\olr$ has been used with on-line versions of other parameter Ramsey
numbers).  These choices free the subscript for the number of colors.  For
ordered Ramsey numbers, $\OR$ was used in \cite{MSW}, but it seems natural to
use the same notation as in the classical problem when it is understood that
the target and host are ordered hypergraphs (see \cite{Con,Mub,MubS}).  Thus we
use $\olsrt(\prk)$ for the $t$-color on-line size Ramsey number of the monotone
tight path $\prk$.

Our results and proofs are motivated by the characterization of the $t$-color
off-line vertex Ramsey number of $\prk$ by Moshkovitz and Shapira~\cite{MS}
(see~\cite{MSW} for an exposition and alternative presentation of the proof).
Henceforth let $m$ be the number of edges in $\prk$; note that $m=r-k+1$.
The arguments and bounds are stated more cleanly in terms of $m$.  Let $Q_1$ be
the poset (partially ordered set) consisting of $t$ disjoint chains of size
$m-1$.  For $j>1$, let $Q_j$ be the poset consisting of all the down-sets in
$Q_{j-1}$, ordered by inclusion.  The bounds on $|Q_k|$ follow inductively.

\begin{thm}[Moshkovitz and Shapira~\cite{MS}]\label{MSthm}
$R_t(\prk)=|Q_k|+1$.  Furthermore,
\[
\tow_{k-2}(m^{t-1}/2\sqrt t)\le |Q_k|\le \tow_{k-2}(2m^{t-1}),
\]
where $m=r-k+1$ and $\tow_h(x)$ equals $x$ when $h=0$ and
$2^{\tow_{h-1}(x)}$ when $h\ge1$.
\end{thm}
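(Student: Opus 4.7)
The plan is to prove $R_t(\prk)=|Q_k|+1$ by induction on $k$, and then derive the tower bounds on $|Q_k|$ from the structure of iterated down-set lattices. For the base case $k=2$, I would use an Erd\H{o}s--Szekeres-style argument. Given a $t$-coloring of the edges of the ordered complete graph on $[N]$, assign to each vertex $v$ the tuple $\lambda(v)=(\ell_1(v),\ldots,\ell_t(v))$, where $\ell_i(v)$ is the number of edges in the longest monotone color-$i$ path ending at $v$. If no monochromatic $P_r^{(2)}$ appears then each coordinate lies in $\{0,\ldots,m-1\}$, so $\lambda(v)\in[m]^t\cong Q_2$. If $u<v$ share the same tuple and the edge $uv$ has color $i$, then $\ell_i(v)\ge\ell_i(u)+1$, contradicting $\lambda(v)=\lambda(u)$. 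Hence $N\le|Q_2|$. An extremal coloring is obtained by fixing a bijection $[|Q_2|]\to Q_2$ respecting the coordinate partial order and coloring each edge $uv$ by a coordinate in which the label of $v$ strictly exceeds that of $u$.

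For the inductive step, given a $t$-coloring of $K_N^{(k)}$, I associate to each $(k-1)$-set $S$ a label $\tau(S)\in Q_2$ recording, for each pair $(c,j)$ with $j<m$, whether some color-$c$ tight subpath of length $j$ has $S$ as its final $k-1$ vertices in order; these pairs form a down-set in $Q_1$. Iteratively, for each $(k-j)$-set $S'$, define $\tau(S')$ as the down-set in $Q_j$ generated by the labels $\tau(S'\cup\{v\})$ over $v>\max S'$, yielding an element of $Q_{j+1}$; continue until each singleton carries a label in $Q_k$. Two vertices $u<v$ with identical labels must force a monochromatic $\prk$: peel the nested down-sets one level at a time, at each stage extracting an extension whose matching label guarantees a one-edge prolongation of a monochromatic tight path. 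Hence $N\le|Q_k|$, and a matching extremal coloring is built by reversing the process using a bijection $[|Q_k|]\to Q_k$.

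For the bounds on $|Q_k|$, start with $|Q_2|\cong[m]^t$. The upper bound $|Q_{j+1}|\le 2^{|Q_j|}$ is immediate because down-sets are subsets of the ground poset; iterating and checking the initial step by hand gives $|Q_k|\le\tow_{k-2}(2m^{t-1})$. For the lower bound, $[m]^t$ has a middle antichain of size at least $m^{t-1}/(2\sqrt{t})$ by a local central limit bound on the coordinate-sum statistic, and any antichain of size $A$ in $Q_j$ yields $2^A$ distinct down-sets, hence $|Q_{j+1}|\ge 2^A$. Iterating gives $|Q_k|\ge\tow_{k-2}(m^{t-1}/(2\sqrt{t}))$.

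The main obstacle is the inductive step of the Ramsey identity: the label function has to be set up so that equality at one level, combined with the ordering of the vertices, produces both equality at the next level and a one-edge prolongation of a monochromatic tight path. Making the nested down-set bookkeeping mesh correctly with tight-path extension, and verifying that the greedy peeling really does unfold a length-$m$ monochromatic path rather than merely many short ones, is the delicate piece and is exactly the heart of the Moshkovitz--Shapira argument.
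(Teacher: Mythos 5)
Your plan is the right general framework (Erd\H{o}s--Szekeres-style labeling by iterated down-sets), and it is indeed the flavor of Moshkovitz--Shapira rather than something new; but as written it has a genuine gap precisely where you flag one, and I want to name it concretely.

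The missing piece is a non-domination lemma, the analogue of the paper's Lemma~\ref{eq:star}: you need a precise statement that whenever two $j$-sets $Y_1,Y_2$ stand in the ``consecutive in a tight path'' relation (say $Y_1^+=Y_2^-$, with all the relevant edges present), their labels satisfy $\tau(Y_1)\ngeq\tau(Y_2)$, proved by induction on the level. Your ``peeling'' sketch does not give you a contradiction on its own: starting from $\tau(\{u\})=\tau(\{v\})$ and unwinding the nested down-sets, what you actually extract at the $(k-1)$-set level is only an inequality $\tau(S_1)\le\tau(S_2)$ for two consecutive $(k-1)$-sets, which is perfectly consistent (the coordinate for the color of the edge $S_1\cup\{w\}$ does increase, but nothing forces it past $m-1$). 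To close the loop you must run the argument inductively at every level, exactly as in Lemma~\ref{eq:star}: at each stage you pick a \emph{maximal} element of the smaller label, extract the corresponding extension, and use the induction hypothesis to show the larger label contains an element outside the smaller down-set, yielding $\ngeq$ and hence, in the presence of equality, a contradiction. Without stating and proving that lemma, the upper bound $R_t(\prk)\le|Q_k|+1$ is not established.

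Two smaller points. First, your labels look ``to the right'' ($\tau(S')$ built from $\tau(S'\cup\{v\})$, $v>\max S'$), whereas the paper's $g_j$ look ``to the left'' (precursors $Z$ with $Z^+=Y$); both directions can work, but once you fix one you must state the non-domination relation consistently with it and check the base case for edges carefully. Second, be aware that the paper does not reprove this theorem ab initio: it derives it as a corollary of the on-line machinery, using the Painter labels $f_j$ for the lower bound $R_t(\prk)>|Q_k|$ (assign all of $Q_k$ to the vertices by a linear extension; the antichain is only needed in the on-line version where vertices may be inserted) and the Builder labels $g_j$ together with Lemmas~\ref{eq:star} and~\ref{followtree} for the upper bound (imagine all $\binom{[n]}{k}$ edges played on $n=|Q_k|+1$ vertices, observe that then every later vertex ``follows'' every earlier one, so all labels are distinct, and one must equal $\Lmb_k$). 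Your direct off-line plan is a reasonable alternative route, but the crucial lemma you defer is not optional bookkeeping; it is the theorem. Your tower-bound paragraph, by contrast, is fine: $|Q_{j+1}|\le 2^{|Q_j|}$ is trivial, and $|Q_{j+1}|\ge 2^{a_j}$ with $a_j$ the width, initialized by the central-layer estimate in $[m]^t$, does give the stated tower.
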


This result immediately implies $\olsrt(\prk)\le \CH{|Q_k|+1}k$, since
$\CH{|Q_k|+1}k$ is the number of edges in $K_{|Q_k|+1}^{(k)}$.  Building on
ideas used in the exposition of this proof in~\cite{MSW}, we present a strategy
for Builder proving an upper bound of $|Q_k|(\lg|Q_k|)^{2+\epsilon}$, where
$\lg$ is the base-$2$ logarithm and $\epsilon$ is any positive constant.  Our
Painter strategy for the lower bound yields roughly the same lower bound as in
Theorem~\ref{MSthm}.  Hence our upper and lower bounds on $\olsrt(\prk)$ are
towers of the same height.

The arguments for the upper and lower bound generalize trivially to the
non-diagonal case $\olsrt(P_{r_1}^{(k)},\dots,P_{r_t}^{(k)})$, where Builder
seeks to force a copy of $P_{r_i}^{(k)}$ in color $i$ for some $i$.  Simply let
$Q_1$ be the disjoint union of $t$ chains such that the $i$th chain has
$r_i-k+1$ elements.

Fox, Pach, Sudakov, and Suk~\cite{FPSS} considered a game with a more
restricted Builder, which was introduced by Conlon, Fox, and Sudakov~\cite{CFS}.
Builder can only introduce a new vertex at the end of the ordering and present
some edges joining the newest vertex to earlier vertices.  Painter colors them
immediately.  Our Builder can simulate this game, so the optimal value $f_t(m)$
in their game is at least $\olsrt(P_{m+1}^{(2)})$.  For constant $t$ (and here
$k=2$), Fox et al.~\cite{FPSS} proved
\[
\frac{t-1-o(t)}{\log t}m^t\log m \le f_t(m)
\le \left(1+\frac{t-1}{\log(1+1/(t-1))}\log(m+1)\right)(m^t+1).
\]
Since their Builder is weaker, their lower bound is naturally larger than ours;
neither result implies the other.  For large $t$ (growing faster than
$m/\log m$), our upper bound is smaller than theirs, but for constant $t$
their upper bound is better.

They also studied the $k$-uniform version of their game, where their objective
was to obtain an upper bound on the vertex Ramsey number of the monotone tight
path in terms of the length of their game.  Since $R_t(\prk)=|Q_k|+1$
by~\cite{MS}, in their game also Builder must use more than $|Q_k|$ vertices to
end the game.

Indeed, if Painter knows that the game is being played by their Builder,
meaning that vertices will only be introduced from left to right, then Painter
can use our strategy (in the general $k$-uniform case) with a supply of $Q_k$
vertices (treating them as described in Section~\ref{sec:hyp}), achieving
$|Q_k|/k$ as a lower bound against their Builder.  Similarly, when the vertices
are known initially (that is, in the off-line setting), our Painter strategy
also implies that any hypergraph forcing $\prk$ has more than $|Q_k|$ vertices,
thus yielding the lower bound $R_t(\prk)>|Q_k|$ in Theorem~\ref{MSthm}.
A closer look at the upper bound strategy for Builder also yields the 
upper bound $R_t(\prk)\le |Q_k|+1$.  The ideas in our proof are similar to the
ideas in the proofs in~\cite{MS} and~\cite{MSW}.

Our proofs also generalize easily to describe the Ramsey number of the monotone
{\em $\ell$-loose $k$-uniform path} $\prkl$ for $1\le\ell\le k$.  Here each edge
consists of $k$ consecutive vertices, and two consecutive edges have $k-\ell$
common vertices.  (In particular, $r=k+\ell(m-1)$ when there are $m$ edges.)
Note that $\prk=\prkkm$, while $\prkkk$ is a $k$-uniform matching in which each
edge ends before the next edge begins in the vertex ordering.  Let
$h=\CL{k/\ell}$.  Our arguments for the on-line version of the problem yield 
$R_t(\prkl)=\ell|Q_h|+s$, where $s=k-(h-1)\ell$.  This formula was obtained
earlier by Cox and Stolee~\cite{CS}, expressed in different notation.
They gave a separate argument for the case $\ell=k$ (matchings), though
this formula applies to both.

In the last section we discuss an off-line version of this problem for
directed graphs and hypergraphs, related to results of 
Ben-Eliezer, Krivelevich, and Sudakov~\cite{BKS}.

\section{On-line scenario: The graph case ($k=2$)}

The game ends when Builder forces Painter to produce a monochromatic monotone
tight path with $m$ edges.  For clarity and because the numerical bounds are
somewhat tighter in this case, we first consider the case $k=2$.  For the
monotone path, $R_t(P_r^{(2)})=m^t+1$.  The trivial upper bound is
$\CH{R_t(P_r^{(2)})}2$, but our upper bound is not much larger than
$R_t(P_r^{(2)})$.  Like the result of~\cite{MS}, it is motivated by the short
proof due to Seidenberg~\cite{Se} of the Erd\H{o}s--Szekeres Theorem~\cite{ES}
on monotone subsequences.

\begin{thm}\label{k=2}
For $m=r-1$ with $r\ge3$, always
$m^{t-1}/(3\sqrt t)\le\olsrt(P_r^{(2)})\le tm^{t+1}$.
\end{thm}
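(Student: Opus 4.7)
Both bounds are organized around the vertex label $\vec u(v)=(u_1(v),\ldots,u_t(v))\in\{0,\ldots,m-1\}^t$, where $u_i(v)$ is the number of edges in the longest color-$i$ monotone path ending at $v$ under the current partial coloring. The game ends the moment some $u_i(v)$ reaches $m$, so Builder's objective reduces to producing a vertex whose label equals $(m-1,\ldots,m-1)$: once such a vertex $v$ exists, one additional edge from $v$ to a fresh right-end vertex forces a monochromatic copy of $P_r^{(2)}$ in a single move, regardless of Painter's color.

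For the upper bound, I would describe a Builder strategy that opens vertices from left to right and applies the following \emph{promotion} sub-routine to the current rightmost vertex $w$: play an edge $vw$ from a previously stocked predecessor $v$ whose label matches $\vec u(w)$ coordinate-wise. Since the labels agree, Painter's chosen color $i$ is forced to move $u_i(w)$ from $u_i(v)$ to $u_i(v)+1$, so the coordinate sum of $w$ grows by exactly $1$. Thus $t(m-1)$ successive promotions raise $\vec u(w)$ to the maximal label. To keep matching predecessors available, Builder maintains a \emph{library} indexed by the $m^t$ possible labels, populating the slots by recursive invocation of the same promotion scheme on fresh auxiliary vertices. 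Summed over the $m^t$ library slots, each costing at most $t(m-1)$ promotion edges, the total edge count is at most $tm\cdot m^t=tm^{t+1}$.

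For the lower bound, I would exhibit an explicit Painter strategy: on input edge $vw$ with $v<w$, Painter picks the color $i^\ast$ minimizing $u_{i^\ast}(v)$, so that $u_{i^\ast}(v)\le\sigma(v)/t$ with $\sigma(v)=\sum_j u_j(v)$. This keeps labels \emph{balanced} across colors. I would track the potential
\[
\Psi=\sum_v\binom{\sigma(v)}{u_1(v),\ldots,u_t(v)},
\]
and show, via Painter's balanced choice, that each edge contributes only a controlled increment to $\Psi$. A monochromatic monotone copy demands some vertex with $u_i(v)=m$, and the maximal value $\binom{t(m-1)}{m-1,\ldots,m-1}$, which is of order $t^{t(m-1)}\sqrt t\,m^{-(t-1)/2}$ by Stirling, caps the total growth budget. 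Solving the resulting inequality for the edge count $E$ yields $E\ge m^{t-1}/(3\sqrt t)$.

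The hard part will be the upper bound's library bookkeeping. Since Painter controls which coordinate of $\vec u(w)$ gets bumped in a given promotion, Builder cannot prescribe the exact label of a newly created vertex, and must instead grow enough candidate vertices in parallel so that every label slot is filled by pigeonhole; showing that this can be done within the $tm^{t+1}$ budget requires a careful amortization across parallel branches of the recursion, organized so that the label needed for any future promotion has already been stocked. For the lower bound, the subtlety is to sharpen the multinomial-based estimate just enough to recover the exact factor $1/(3\sqrt t)$ rather than a looser $1/t$ or an absolute constant.
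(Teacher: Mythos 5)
Your lower bound is where the proposal genuinely fails. The paper's Painter does not use the dynamic longest-path labels at all: it fixes in advance distinct labels from the middle layer $B=\{a\in\{0,\dots,m-1\}^t:\sum_i a_i=\lfloor (m-1)t/2\rfloor\}$ (an antichain), never changes them, and colors an edge $uv$ with a coordinate $i$ for which $a_i(v)>a_i(u)$; the invariant that every color-$i$ path ending at $w$ has at most $a_i(w)$ edges, the fact that using more than $|B|$ vertices costs more than $|B|/2$ edges, and the estimate $|B|\ge\tfrac23 m^{t-1}/\sqrt t$ give the bound. Your greedy Painter (color $vw$ by a coordinate minimizing the current vector at $v$) is essentially deterministic and hence predictable, and Builder can exploit it: the color of every edge out of $v$ is determined by $v$'s label, and that label never changes when $v$ is used as a left endpoint, so donors are reusable. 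Builder can therefore build, level by level and for each color $i$, a donor whose label has its (lexicographically first) minimum in coordinate $i$ with value $j$; playing $t$ edges from such donors into a fresh vertex raises all of its coordinates to at least $j+1$, and by mixing level-$j$ and level-$(j+1)$ donors Builder controls where the minimum sits. This produces a vertex with a coordinate equal to $m$ after $O(t^2m)$ edges, which is far below $m^{t-1}/(3\sqrt t)$ already for fixed $t\ge 3$ and large $m$. So no potential-function analysis (and your $\Psi$ argument is only sketched, with no accounting of ties or of how a single edge changes $\Psi$) can certify the claimed bound for that strategy; the strategy itself must be replaced by a static antichain labeling.

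For the upper bound your core mechanism is the paper's: an edge between two vertices with identical labels cannot already exist, and whatever color Painter uses strictly increases a coordinate of the right vertex's label. But the ``library'' superstructure is both unnecessary and, as you concede, unresolved: since Painter chooses the colors, Builder cannot stock a vertex with a prescribed label, and populating a slot ``by recursive invocation'' presupposes matching predecessors for the auxiliary vertices, which is exactly what the library was supposed to provide — the accounting is circular. The paper needs none of this: Builder plays on a fixed set of $m^t+1$ vertices; as long as no vertex among the first $m^t$ has the top label $(m-1,\dots,m-1)$, either all labels are distinct (so the top label occurs, a contradiction) or two vertices share a label, are necessarily nonadjacent, and Builder plays that one edge, increasing some coordinate of the later vertex. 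Each of the $m^t$ vertices can absorb fewer than $(m-1)t$ increases before the game ends, so within $m^t[(m-1)t-1]+1$ rounds some vertex reaches the top label, and one final edge to the last vertex wins; this is less than $tm^{t+1}$. Without this global pigeonhole argument your $tm^{t+1}$ budget is not established.
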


\begin{proof}
Let $M = \{0,1,\ldots,m-1\}$.
Given $a=(\VEC a1t)\in M^t$, let $|a|=\sum a_i$.

\smallskip
{\bf Upper bound} {\it (Builder strategy):}
Builder uses $m^t+1$ vertices, viewed as ordered from left to right.  At
any time, all vertices are labeled with vectors in $M^t$, where the $i$th
coordinate of the label for $v$ is the number of edges in the longest monotone
path in color $i$ that ends at $v$.  All labels are initially the all-$0$
vector.  Let $\top$ denote the ``top'' vector in $M^t$; its components all
equal $m-1$.

Builder seeks to produce label $\top$ at one of the first $m^t$ vertices, after
which playing the edge from this vertex to the last (rightmost) vertex wins the
game no matter what color Painter gives it.  If no two vertices among the first
$m^t$ have the same label, then all labels occur, including $\top$.  

Otherwise, some vertices $u$ and $v$ have the same label, say with $u$ before
$v$.  These vertices cannot yet be adjacent, since their labels would then
differ in the coordinate for the color of $uv$.  Builder plays $uv$.  The label
for $v$ increases in the coordinate for the color Painter uses on $uv$.

On each round, the label for the second vertex of the edge played increases by
$1$ in some coordinate.  To avoid reaching $\top$ or reaching $m$ in any
coordinate, each label must increase fewer than $(m-1)t$ times.  By the
pigeonhole principle, within $m^t[(m-1)t-1]+1$ rounds some label reaches
$\top$, and the next play wins.  Note that $m^t[(m-1)t-1]+1<tm^{t+1}$.

\smallskip
{\bf Lower bound} {\it (Painter strategy):}
Let $B =\{a\in M^t\st |a|=\FL{(m-1)t/2}\}$.  Until Builder uses more than
$|B|$ vertices, Painter can assign different labels from $B$ to all vertices
used.  These labels remain unchanged throughout the game.  Let $a(v)$ denote
the label assigned by Painter to $v$, with $a(v)=(a_1(v),\dots,a_t(v))$.  When
Builder plays an edge $uv$ with $u$ before $v$, Painter gives it a color $i$
such that $a_i(v)>a_i(u)$.  Such a coordinate exists, since $a(u)\ne a(v)$ and
$|a(u)|=|a(v)|$.

Choosing colors in this way maintains for each vertex $w$ the property that 
every monotone path in color $i$ arriving at vertex $w$ has at most $a_i(w)$
edges.  This holds since along a monotone path in color $i$ the $i$th
coordinate of the label strictly increases with each step.  Since
$a(w)\in M^t$, no monochromatic monotone path has $m$ edges.  Since using more
than $|B|$ vertices requires playing more than $|B|/2$ edges, Painter can
survive at least $|B|/2$ rounds without creating a monochromatic monotone path
with $m$ edges.

The elements of $M$ are the elements of $Q_2$, and $B$ is a middle level.
Using Chebyshev's Inequality and the pigeonhole principle,
Moshkovitz and Shapira~\cite{MS} showed $|B|\ge \frac23 m^{t-1}/\sqrt t$.
\end{proof}

\begin{remark}
It is well known by many arguments that $B$ is a largest level in $Q_2$.
(For example, the product of chains is a symmetric chain order, the convolution
of symmetric log-concave sequences is symmetric and log-concave, explicit
injections map one level to the next toward the middle, etc.)  Since
$|M^t|=m^t$ and there are $(m-1)t+1$ levels, we thus have $|B|> m^{t-1}/t$ by
the pigeonhole principle alone.

Using the Chernoff bound instead of Chebyshev's Inequality
in the argument in~\cite{MS}, we can improve
the lower bound on $|B|$ to $0.7815987 m^{t-1}/\sqrt t$.  The value of $|B|$
was also studied by Alekseev~\cite{Al}.  A special case is that when
$m\in o(e^t/\sqrt t)$, the value of $|B|$ is asymptotic to
$m^{t-1}/\sqrt{\pi t/6}$.

For the non-diagonal case, with $m_i$ being the forbidded
length in color $i$, the argument yields
\[
\frac{\prod m_i}{2\sum m_i}\le 
\olsrt(P_{r_1}^{(2)},\dots,P_{r_t}^{(2)})\le
\sum m_i\prod m_i.
\]
Here the pigeonhole argument for the size of the largest antichain in $Q_2$
gives the lower bound on $|B|$.  Again Chebyshev's Inequality can be used to
improve it somewhat, but the resulting formula is more complicated.

Our lower bound remains valid against a stronger Builder.  Suppose Builder
can present any directed graph in seeking a monochromatic directed path,
instead of only presenting edges directed from lower to higher vertices.  The
strategy for Painter establishes the same lower bound, where ``an edge $uv$
with $u$ before $v$'' becomes ``an edge directed from $u$ to $v$''.  This 
works because the labels for vertices are incomparable.  We will return to
the digraph problem in the last section.
\end{remark}

\section{On-line scenario: The hypergraph case}\label{sec:hyp}

For the $k$-uniform monotone tight path, the flavor of the arguments extends
that of the graph case, but the details are more delicate.
As described in the introduction, let $Q_1$ be the poset consisting of $t$
disjoint chains of $m-1$ elements each.  The $i$th chain is associated with 
color $i$.  For $j>1$, the poset $Q_j$ consists of the down-sets in $Q_{j-1}$,
ordered by inclusion.  The arguments are the same for the non-diagonal case,
with the $i$th chain in $Q_1$ consisting of $m_i-1$ elements, where
$m_i=r_i-k+1$.

We will first study the upper bound.  Let $G$ denote the current hypergraph of
edges played by Builder and colored by Painter.  In the strategy for Builder
used to prove the upper bound, Builder will confine play to a fixed vertex set
$[n]$, where $[n]=\{1,\ldots,n\}$, under the usual order on $\NN$.
Given a set $Y\esub[n]$, let $Y^+$ be the set obtained from $Y$ by deleting
the first vertex, and let $Y^-$ be the set obtained from $Y$ by deleting
the last vertex.  Let $\CH{[n]}j$ denote the family of $j$-element subsets
of $[n]$.  We define functions $\VEC gk1$ such that $g_j\st\CHnj\to Q_{k-j+1}$,
except that $g_k$ is defined only on the $k$-sets that are actual edges of $G$.
These functions will be used in Builder's strategy while $G$ has no
monochromatic $\prk$.

\begin{defn}\label{labelfcn}
For $Y\in E(G)$, if $Y$ has color $i$ and
the longest monochromatic tight path with last edge $Y$ has $p$ edges, then
let $g_k(Y)$ be element $p$ on the $i$th chain in $Q_1$.
For $Y\in \CHnj$ with $j<k$, let $\Yup=\{Z\in \CHnjp\st Z^+=Y\}$; call the
elements of $\Yup$ the {\it precursors} of $Y$.
Given that $g_{j+1}$ has been defined, for $Y\in \CHnj$
define $g_j(Y)$ as follows:

\medskip
\hfill
{$g_j(Y)$ is the downset in $Q_{k-j}$ generated by
$\{g_{j+1}(Z)\st Z\in \Yup\}$.}
\hfill
\medskip

\noindent
Being a downset in $Q_{k-j}$, by definition $g_j(Y)\in Q_{k-j+1}$.
\end{defn}

\begin{defn}\label{follows}
Given $Y_1,Y_2\in E(G)$, say that $Y_2$ {\it follows} $Y_1$ if $Y_1^+=Y_2^-$.
For $Y_1,Y_2\in \CHnj$ with $j<k$, say that
$Y_2$ {\it follows} $Y_1$ if

\qquad~(A) $Y_1^+=Y_2^-$ and

\qquad~(B) for each maximal element $w$ of $g_j(Y_1)$, the $(j+1)$-set
$Y_1\cup Y_2$ 

\qquad\qquad follows some precursor $Z_1$ of $Y_1$ such that $g_{j+1}(Z_1)=w$.
\end{defn}

Note that (B) in Definition~\ref{follows} holds trivially when $g_j(Y_1)$
is empty.  Since a precursor $Z_2$ of $Y_2$ following a
precursor $Z_1$ of $Y_1$ requires $Z_2^-=Z_1^+=Y_1$, the set $Y_1\cup Y_2$ is
the only precursor of $Y_2$ that can follow a precursor of $Y_1$.  When $Y_2$
follows $Y_1$, the set $Y_1\cup Y_2$ is a set $Z$ such that $Z^-=Y_2$ and
$Z^+=Y_2$.  Our strategy for Builder is based on the following crucial property
of $g_j$.

\begin{lem}\label{eq:star}
{If $Y_2$ follows $Y_1$ in $\CHnj$, then $g_j(Y_1) \ngeq g_j(Y_2)$
in $Q_{k-j+1}$.}
\end{lem}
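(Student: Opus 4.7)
The plan is to proceed by induction on $k-j$, descending from the base case $j=k$ down to $j=1$.

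For the base case, the argument unwinds directly from the definition of $g_k$. Here $Y_1$ and $Y_2$ are actual edges with $Y_1^+=Y_2^-$, so two cases present themselves based on colors. If $Y_1$ and $Y_2$ receive different colors, their $g_k$-values sit on different chains of $Q_1$ and are therefore incomparable. If they share color $i$, appending $Y_2$ to a longest monochromatic tight path ending at $Y_1$ produces a strictly longer path ending at $Y_2$, giving $g_k(Y_2)>g_k(Y_1)$ on the $i$th chain. Either way, $g_k(Y_1)\not\geq g_k(Y_2)$.

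The inductive step assumes the conclusion for $j+1$ and proceeds by contradiction. Suppose $g_j(Y_1)\geq g_j(Y_2)$ in $Q_{k-j+1}$; unpacking, this means $g_j(Y_2)\subseteq g_j(Y_1)$ as downsets in $Q_{k-j}$. The natural witness to inspect is $Z_2:=Y_1\cup Y_2$: by clause (A), $Y_1^+=Y_2^-$, so $Z_2$ is a $(j+1)$-set with $Z_2^+=Y_2$, and hence $Z_2\in\overleftarrow{Y_2}$. By the generator definition of $g_j(Y_2)$, this yields $g_{j+1}(Z_2)\in g_j(Y_2)\subseteq g_j(Y_1)$. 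Now pick a maximal element $w$ of the finite downset $g_j(Y_1)$ with $w\geq g_{j+1}(Z_2)$, and invoke clause (B) to obtain a precursor $Z_1\in\overleftarrow{Y_1}$ with $g_{j+1}(Z_1)=w$ such that $Z_2$ follows $Z_1$ as $(j+1)$-sets. The inductive hypothesis applied to $Z_1,Z_2$ then gives $g_{j+1}(Z_1)\not\geq g_{j+1}(Z_2)$, contradicting $g_{j+1}(Z_1)=w\geq g_{j+1}(Z_2)$.

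The main obstacle is recognizing that the $(j+1)$-set $Z_2=Y_1\cup Y_2$ must play two roles simultaneously: it is a generator of $g_j(Y_2)$ because $Z_2^+=Y_2$, and it is exactly the distinguished set that clause (B) of Definition~\ref{follows} forces to follow some appropriate precursor of $Y_1$. Without this clause, the assumption $g_j(Y_2)\subseteq g_j(Y_1)$ would only yield $g_{j+1}(Z_2)\in g_j(Y_1)$, and one would have no way to route the conclusion back into the inductive hypothesis on $(j+1)$-sets. Identifying this alignment between the generator of $g_j(Y_2)$ and the witness produced by (B) is what closes the induction.
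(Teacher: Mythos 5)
Your proof is correct and follows essentially the same route as the paper's. The base case is identical, and in the inductive step you isolate the same witness $Z=Y_1\cup Y_2$, noting that it is simultaneously a precursor of $Y_2$ (hence $g_{j+1}(Z)\in g_j(Y_2)$) and the set that clause~(B) makes follow a precursor of $Y_1$ labeled by any chosen maximal element of $g_j(Y_1)$, which is exactly the alignment the paper exploits. The only difference is presentational: the paper argues directly that $g_{j+1}(Z)$ lies outside the downset $g_j(Y_1)$ by running over all maximal elements, whereas you suppose $g_j(Y_2)\subseteq g_j(Y_1)$, pick one maximal $w\geq g_{j+1}(Z)$, and derive a contradiction from the inductive hypothesis; your contradiction framing also silently absorbs the degenerate case $g_j(Y_1)=\emptyset$, which the paper disposes of in a separate sentence.
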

\begin{proof}
The proof is by induction on $k-j$.  For $j=k$, if $Y_2$ follows $Y_1$ in
$E(G)$, then either $Y_1$ and $Y_2$ have the same color, in which case
$g_k(Y_2)>g_k(Y_1)$ in $Q_1$, or they have different colors, in which case
$g_k(Y_1)$ and $g_k(Y_2)$ are incomparable in $Q_1$.  In either case,
$g_k(Y_1)\ngeq g_k(Y_2)$.

For $j<k$, suppose that the claim holds for $j+1$.  Given that $Y_2$ follows
$Y_1$ in $\CHnj$, let $Z=Y_1\cup Y_2\in \CHnjp$.  If $Y_1$ has no precursors
(that is, $g_j(Y_1)$ is empty), then the statement is trivially true since $Z$
is a precursor of $Y_2$ and thus $g_j(Y_2)$ is nonempty.  Otherwise, let $w$ be
a maximal element of $g_j(Y_1)$.  Since $Y_2$ follows $Y_1$, by definition $Z$
follows some $Z_1\in \Yup_1$ with $g_{j+1}(Z_1)=w$.  By the hypothesis for
$j+1$, we have $w=g_{j+1}(Z_1)\ngeq g_{j+1}(Z)$ for all such $Z_1$.  Since this
holds for all $w$ that are maximal in $g_j(Y_1)$, the label $g_{j+1}(Z)$ does
not lie in the downset generated by the precursors of $Y_1$ (which by
definition is $g_j(Y_1)$).  However, since $Z\in\Yup_2$, the label $g_{j+1}(Z)$
does lie in $g_j(Y_2)$.  Hence as downsets in $Q_{k-j}$, the family $g_j(Y_2)$
is not contained in the family $g_j(Y_1)$, which means
$g_j(Y_1)\ngeq g_j(Y_2)$ as elements of $Q_{k-j+1}$.
\end{proof}

The inductive definition of ``follows'' facilitates Lemma~\ref{eq:star}.  To
simplify the presentation of the Builder's strategy, we provide a more explicit
description of what ``$Y_2$ follows $Y_1$'' guarantees.

\begin{defn}\label{UY}
For a $j$-set $Y$ with $j<k$ or an edge $Y\in E(G)$, we form a tree $U(Y)$.  The
nodes of the tree are elements of the posets $\VEC Q{k-j+1}1$ occurring as
labels.
The root of $U(Y)$ is the label $g_j(Y)\in Q_{k-j+1}$.  For any node $w$
in $U(Y)$ that is in $Q_i$ for $i>1$, its children are the maximal elements of
the downset in $Q_{i-1}$ that is $w$.  The process iterates until we reach
elements of $Q_1$ as the leaves of $U(Y)$.

An {\it instance} of $U(Y)$ associates vertex sets to the nodes.  Associated to
the root of $U(Y)$, which has label $g_j(Y)\in Q_{k-j+1}$, is the set $Y$.
To a non-root node $w\in Q_i$ whose parent in $U(Y)$ is $z\in Q_{i+1}$ and has
associated $(k-i)$-set $Z$, we associate a precursor $Z'$ of $Z$ such that
$g_i(Z')=w$; note that $Z'$ is a $(k-i+1)$-set.  Iteratively, we choose
associated sets moving away from the root.  Since the leaves are labels in
$Q_1$, their associated sets are $k$-sets: that is, edges.  From the definition
of $\VEC g1j$, every such tree $U(Y)$ has at least one such instance.
\end{defn}

\begin{lem}\label{followtree}
A $j$-set $Y_2$ follows a $j$-set $Y_1$ if and only if $Y_1^+=Y_2^-$ and
there is an instance of $U(Y_1)$ such that for every edge $W$ associated with a
leaf, replacing the first vertex of $W$ with the last vertex of $Y_2$ yields an
edge $Z$ in $G$.
\end{lem}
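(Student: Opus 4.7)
The plan is to prove the lemma by induction on $k-j$, mirroring the inductive nature of Definition~\ref{follows}. The key structural observation is that the tree $U(Y_1)$ has a recursive form: its root carries label $g_j(Y_1)$ and associated set $Y_1$, and its children correspond to the maximal elements $w$ of the downset $g_j(Y_1)$ in $Q_{k-j}$. In any instance, each child labeled $w$ is assigned a precursor $Z_1$ of $Y_1$ with $g_{j+1}(Z_1)=w$, and the subtree rooted there (together with its own assignments) is precisely an instance of $U(Z_1)$. Consequently, the leaves of an instance of $U(Y_1)$ decompose as the union, over the maximal elements of $g_j(Y_1)$, of the leaves of the chosen subinstances $U(Z_1)$.

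For the base case $j=k$, the tree $U(Y_1)$ is a single node (root and sole leaf) whose associated set is $W=Y_1$. Replacing the first vertex of $W$ by $\max(Y_2)$ produces $Y_1^+\cup\{\max(Y_2)\}$, which under $Y_1^+=Y_2^-$ equals $Y_2$, an edge of $G$ by hypothesis. Thus the lemma's stated condition reduces to $Y_1^+=Y_2^-$, matching the edge-case clause of Definition~\ref{follows}.

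For the inductive step ($j<k$), set $Z=Y_1\cup Y_2$, a $(j+1)$-set satisfying $Z^-=Y_1$ and $Z^+=Y_2$; note that $\max(Y_2)=\max(Z)$, since $Y_1^+=Y_2^-$ forces $\max(Y_2)$ to exceed every vertex of $Y_1$. By Definition~\ref{follows}(B), ``$Y_2$ follows $Y_1$'' means that $Y_1^+=Y_2^-$ and, for every maximal $w\in g_j(Y_1)$, there exists a precursor $Z_1$ of $Y_1$ with $g_{j+1}(Z_1)=w$ such that $Z$ follows $Z_1$. Applying the inductive hypothesis to each such ``$Z$ follows $Z_1$'' (whose equality requirement $Z_1^+=Y_1=Z^-$ is automatic), this rewrites as: there is an instance of $U(Z_1)$ in which every leaf edge $W$, after its first vertex is replaced by $\max(Z)=\max(Y_2)$, lies in $G$. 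Assembling these local existence statements over all maximal $w$ via the recursive decomposition from the first paragraph yields precisely an instance of $U(Y_1)$ satisfying the leaf-edge condition of the lemma; conversely, splitting any such instance of $U(Y_1)$ into its child subinstances recovers the ingredients of Definition~\ref{follows}(B). This closes the induction.

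The only delicate point I expect is the boundary case $g_j(Y_1)=\emptyset$: then $U(Y_1)$ has no children and no leaf edges at all, so the leaf-edge condition holds vacuously; meanwhile clause (B) of Definition~\ref{follows} is likewise vacuous, so the equivalence still holds. Beyond this, the only subtlety is the bookkeeping check that ``last vertex of $Z$'' and ``last vertex of $Y_2$'' coincide, so that the inductive hypothesis, stated in terms of the last vertex of $Z$, plugs in with the correct target vertex $\max(Y_2)$ required at the top level.
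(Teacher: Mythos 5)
Your proposal is correct, and it is essentially the same idea as the paper's proof: both unroll the recursive Definition~\ref{follows} into the tree structure of $U(Y_1)$. Your version is organized as a clean structural induction on $k-j$ with an explicit subtree decomposition (observing that the subtree at a child labeled $w$ with associated precursor $Z_1$ is an instance of $U(Z_1)$) and handles both directions of the equivalence plus the empty-downset case, whereas the paper's proof unwinds the recursion informally along root-to-leaf paths and only spells out the forward direction; the content is the same.
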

\begin{proof}
As we move away from the root node in $U(Y_1)$, with each step the precursors
get larger by adding vertices at the left.  We construct the needed instance of
$U(Y_1)$ by associating labels along each path from the root.
Given that $Y_2$ follows $Y_1$, let $Z_1=Y_1\cup Y_2$.  Note that $Z_1$ arises
from $Y_1$ by adding the last vertex $y$ from $Y_2$.  By the definition of
$Y_2$ following $Y_1$, the set $Z_1$ is required to be a $(j+1)$-set that, for
each child $w_1$ of the root of $U(Y_1)$, follows some precursor $W_1$ of $Y_1$
that has label $w_1$.
This selects $W_1$ as a $(j+1)$-set to associate with $w_1$ in the instance of
$U(Y_1)$ we are building.  Repeating this observation along a path from the
root to a leaf of $U(Y_1)$, we obtain successively larger sets $\VEC Z1{k-j}$
that follow corresponding sets $\VEC W1{k-j}$ associated with the nodes along
the path.  Each $Z_i$ is obtained by deleting the smallest element of $W_i$ and
adding $y$.  Finally, $Z_{k-j}$ is an edge following an edge $W_{k-j}$
associated with the leaf at the end of the path.  We obtain such an edge
$Z_{k-j}$ for each leaf.
\end{proof}

\begin{remark}\label{bottom}
For $Y\in\CH{[n]}j$, if no precursor of $Y$ has a defined label, then the
downset generated by $\Yup$ is empty, and $g_j(Y)$ is the bottom element of
$Q_{k-j+1}$.  This occurs for a $(k-1)$-set whose precursors all are not edges
of $G$ and for any $j$-set with first vertex $1$ (it has no precursors).

Each of $\VEC Q2k$ has one element of rank $0$, which is the empty downset in
the previous poset.  Also each of $\VEC Q3k$ has one element of rank $1$, which
is the downset of size $1$ consisting of the bottom element of the previous
poset.  Inductively, ranks $0$ through $j-2$ of $Q_j$ form a single chain with
one element of each rank.  For $0\le i\le j-2$, let $\bz_j^i$ be the element of
rank $i$ in $Q_j$.

With vertex set $[n]$ before any edges have been played, all $k$-sets have
undefined labels.  Hence the label of each $(k-1)$-set is $\bz_2^0$.
The label of a $j$-set with least element $1$ is $\bz_{k-j+1}^0$.
A $j$-set $Y$ with least element $2$ has one precursor, with label
$\bz_{k-j}^0$, so $g_j(Y)=\bz_{k-j+1}^1$.
Inductively, for $j<k$, a $j$-set $Y$ with least element $i$ has initial label
$\bz_{k-j+1}^{i-1}$ if $i\le k-j$ and label $\bz_{k-j+1}^{k-j-1}$ if $i>k-j$.
In particular, for the crucial case $j=1$, the initial label of the vertex $i$
is $\bz_k^{i-1}$ for $i\le k-1$ and $\bz_k^{k-2}$ for $i>k-1$.
\end{remark}

Our upper bound for general $k$ is also valid for $k=2$, but in that case
Theorem~\ref{k=2} provides a stronger bound.  For $k=3$ our bound is a bit
weaker than for larger $k$, which introduces some complication in the inductive
proof.  The combinatorial bound obtained first is valid for all $k,m,t$, but
the bound in terms of $|Q_k|$ alone requires $tm$ (or equivalently $|Q_1|$) to
be sufficiently large.

\begin{thm}\label{main}
For $k,m,t\in\NN$ with $t,m\ge2$ and $r=k+m-1$.
\[
\olsrt(\prk)\le |Q_k|\cdot |Q_{k-1}|\PE i1{k-1}a_i,
\]
where $a_i$ is the size of the largest antichain in $Q_i$.
Moreover, for any positive constant $\epsilon$,
\[
|Q_3|\cdot |Q_{2}| a_2 a_1 \le |Q_3| (\lg|Q_3|)^{2+\FR1{t-1}+\eps}
\qquad\text{and}\qquad
|Q_k|\cdot |Q_{k-1}|\PE i1{k-1}a_i \le |Q_k| (\lg|Q_k|)^{2+\eps}
\quad
\text{(for $k\ge 4$)}
\]
when {$tm$} is sufficiently large in terms of $\eps$.
\end{thm}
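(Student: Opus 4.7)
The plan is to design a recursive Builder strategy grounded in the label functions $g_j$ of Definition~\ref{labelfcn} and the ``follows'' relation of Definition~\ref{follows}, and then simplify the resulting bound using the tower structure of the posets $Q_j$. Builder fixes a vertex set $[n]$ with $n=|Q_k|+1$. Since level-1 labels lie in $Q_k$, pigeonhole produces two vertices $u<v$ with $g_1(u)\ge g_1(v)$ (for example, equal labels); by Lemma~\ref{eq:star}, $\{v\}$ does not follow $\{u\}$. Builder ``repairs'' this obstruction by playing edges, forcing $g_1(v)$ to strictly rise in $Q_k$. Because $Q_k$ is the downset lattice of $Q_{k-1}$, its longest chain has $|Q_{k-1}|+1$ elements, so each vertex's label admits at most $|Q_{k-1}|$ increments. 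Once some label reaches the top, one additional edge from $v$ to a fresh vertex forces a monochromatic $\prk$, as in Theorem~\ref{k=2}.

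The core is a recursive procedure $\mathsf{Force}(j,Y_1,Y_2)$ that, given $Y_1^+=Y_2^-$ and $g_j(Y_1)\ge g_j(Y_2)$ in $Q_{k-j+1}$, plays edges to strictly raise $g_j(Y_2)$. By Lemma~\ref{eq:star}, $Y_2$ does not follow $Y_1$, and by Lemma~\ref{followtree} this failure is witnessed by an instance of $U(Y_1)$ in which, for each maximal element $w$ of the downset $g_j(Y_1)\subseteq Q_{k-j}$, the $(j+1)$-set $Y_1\cup Y_2$ fails to follow some precursor $Z_1$ of $Y_1$ with $g_{j+1}(Z_1)=w$. Builder addresses each such $w$ by invoking $\mathsf{Force}$ at level $j+1$ on pairs derived from $Z_1$ and $Y_1\cup Y_2$, modifying $g_{j+1}(Y_1\cup Y_2)$ enough to make $Y_1\cup Y_2$ follow $Z_1$. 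The base case $j=k$ is a single edge play: Painter's color sets $g_k$ onto a different chain of $Q_1$ or higher on the same chain, so $g_k(Y_2)\not\le g_k(Y_1)$. Writing $\varphi(j)$ for the edge cost per call, downward induction yields $\varphi(j)\le a_{k-j}\cdot\varphi(j+1)$, where $a_{k-j}$ bounds the number of maximal elements in any downset of $Q_{k-j}$. With $\varphi(k)=1$, iterating gives $\varphi(1)\le\prod_{i=1}^{k-1}a_i$. Multiplying by the outer budget $|Q_k|\cdot|Q_{k-1}|$ yields the first claimed bound.

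For the simplification, I would use $|Q_k|\le 2^{|Q_{k-1}|}$ to obtain $|Q_{k-1}|\le\lg|Q_k|$, and a Sperner-type estimate $a_i\le O(|Q_i|/\sqrt{|Q_{i-1}|})$ for antichains in downset lattices. Because the sizes $|Q_i|$ form a tower, the lower-index terms $|Q_1|,\dots,|Q_{k-2}|$ are iterated logarithms of $|Q_{k-1}|$, so their product is $o(|Q_{k-1}|^\epsilon)$ for any $\epsilon>0$ when $tm$ is large. Combined with $a_{k-1}\le |Q_{k-1}|/\sqrt{|Q_{k-2}|}$, this forces $\prod_{i=1}^{k-1}a_i\le|Q_{k-1}|^{1+\epsilon}\le(\lg|Q_k|)^{1+\epsilon}$, and multiplying by $|Q_{k-1}|$ gives the $(\lg|Q_k|)^{2+\epsilon}$ bound for $k\ge 4$. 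For $k=3$, only the factors $a_1=t$ and $a_2$ appear, so the $t$-dependence cannot be absorbed into iterated logs and is tracked explicitly, producing the extra $\tfrac{1}{t-1}$ in the exponent.

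I expect the principal obstacle to be the analysis of $\mathsf{Force}$ itself: recursive calls for different maximal elements $w$ may alter labels at lower levels that feed into each other's preconditions, so one must verify that the repairs compose correctly, that the precondition $g_{j+1}(\cdot)\ge g_{j+1}(\cdot)$ is maintained at every recursive invocation, and that when all recursive calls return, the obstruction at level $j$ has genuinely been eliminated rather than just relocated. Definitions~\ref{labelfcn}--\ref{UY} and Lemmas~\ref{eq:star}--\ref{followtree} are tailored to support this bookkeeping, but confirming the compositionality of the repairs is the delicate step.
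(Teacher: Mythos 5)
Your combinatorial outline follows the same underlying plan as the paper's proof: fix $n=|Q_k|+1$, use pigeonhole on vertex labels $g_1(\cdot)\in Q_k$ to find two vertices with equal label, play edges to make $y$ ``follow'' $x$ so that (by Lemma~\ref{eq:star}) $g_1(y)$ must rise, bound the number of edges per round by the number of leaves in $U(\{x\})$, and bound the number of rounds by $|Q_k|\cdot|Q_{k-1}|$. However, your recursive formulation via $\mathsf{Force}(j,Y_1,Y_2)$ is not quite what the paper does, and it reintroduces exactly the composition difficulty you flag in your last paragraph. The paper avoids that difficulty: Lemma~\ref{followtree} is an existence statement (``$Y_2$ follows $Y_1$ iff \emph{there is} an instance of $U(Y_1)$ with all the required leaf-derived edges present''), so Builder simply fixes one instance of $U(\{x\})$, reads off the set of $k$-sets $Z^+\cup\{y\}$ needed, and plays all of them in a block. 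No labels of sets ending at $x$ change during this block (the new edges all end at $y$), so the fixed instance of $U(\{x\})$ remains a valid witness afterwards, and ``$y$ follows $x$'' then holds outright. There is no interleaved recursion whose subcalls could invalidate one another's preconditions; the recursion lives only inside the lemmas. You should replace $\mathsf{Force}$ with this batch argument (your cost bound $\prod_{i=1}^{k-1}a_i$ is then simply the leaf count of $U(\{x\})$, since children of a node in $Q_j$ form an antichain in $Q_{j-1}$). Also note you need the pigeonhole argument for two \emph{equal} labels, not merely comparable ones, since what Lemma~\ref{eq:star} gives you from ``$y$ follows $x$'' is $g_1(x)\ngeq g_1(y)$; equal labels is the clean way to contradict that while still controlling the count of available labels.

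The numerical part has a genuine error. You write ``use $|Q_k|\le 2^{|Q_{k-1}|}$ to obtain $|Q_{k-1}|\le\lg|Q_k|$,'' but $|Q_k|\le 2^{|Q_{k-1}|}$ gives $\lg|Q_k|\le|Q_{k-1}|$, the \emph{opposite} direction, and in general $|Q_{k-1}|$ need not be bounded by $\lg|Q_k|$. The inequality the paper actually uses is $a_i\le\lg|Q_{i+1}|$, which comes from the fact that distinct subsets of a largest antichain in $Q_i$ generate distinct downsets, so $|Q_{i+1}|\ge 2^{a_i}$. To control $|Q_{k-1}|$ itself one then uses $q_i\le a_iq_{i-1}$ (Dilworth plus pigeonhole on the $q_{i-1}+1$ ranks of $Q_i$), pushing the slack into the iterated-log terms. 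Your ``Sperner-type estimate $a_i\le O(|Q_i|/\sqrt{|Q_{i-1}|})$'' is also not justified: $Q_i$ is an arbitrary distributive lattice $J(Q_{i-1})$, for which the Sperner property can fail, and no such $1/\sqrt{\cdot}$ bound is available or needed. Replacing both of these with the elementary bounds $a_i\le\lg q_{i+1}$ and $q_i\le a_i q_{i-1}$, and carrying out the induction with a slightly weakened hypothesis (as the paper does, proving $\prod_{i=1}^{k-1}q_i\le(\lg q_k)^{1+\eps/2}$ with a separate base case at $k=3$), repairs the calculation.
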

\begin{proof}
We give a strategy for Builder.  Let $n=|Q_k|+1$.  Builder plays on the fixed
ordered vertex set $[n]$, numbered from left to right.
After each round the functions $\VEC gk1$ are defined as in
Definition~\ref{labelfcn} for the hypergraph played so far.  Let $\Lmb_j$ be
the unique top element in $Q_j$, for $2\le j\le k$.  Builder seeks a vertex $z$
in $[n]-\{n\}$ with $g_1(z)=\Lmb_k$.  This vertex $z$ must have a precursor
$\{y,z\}$ with label $\Lmb_{k-1}$, since $\Lmb_k$ is the downset in $Q_{k-1}$
that is all of $Q_{k-1}$.  Iterating, some $(k-1)$-set $Y$ ending at $z$ has
label $\Lmb_2$.  Since $\Lmb_2=(m-1,\dots,m-1)$, in each color some precursor
of $Y$ is the edge ending a path of $m-1$ edges.  Builder then plays the edge
${Y\cup\{n\}}$ to win.

Builder plays to force Painter to produce such a vertex $z$.  Before any edges
are played, the labels are as described in Remark~\ref{bottom}.  The labels
of the first $k-1$ vertices never change, since no edge can be played ending
at one of those vertices.  All vertices from $k-1$ to $n$ initially have the
same label, with rank $k-2$ in $Q_k$.

Playing an edge in the game creates a label for that edge.  The label of an
existing edge stays the same or moves upward on its chain, by the definition
of $g_k$.  For a $j$-set $Y$ with $j<k$, by induction on $k-j$, the label
$g_j(Y)$ stays the same or moves upward in $Q_{k-j+1}$, because the label is
defined to be the downset generated by the labels of the precursors.  The
precursors remain the same (except that precursors can be added when $j=k-1$).
By the induction hypothesis, the labels of the precursors stay the same or move
up.  Hence the downset they generate stays the same or becomes larger, which
means that $g_j(Y)$ stays the same or moves up.

After the first $k-2$ vertices and before the last, there are
$|Q_k|-k+2$ vertices, and their labels are initially (and hence always)
above the bottom $k-2$ elements of $Q_k$.  If $\Lmb_k$ is not the label of
any of them, then their labels are confined to a set of $|Q_k|-k+1$ elements in
$|Q_k|$.  By the pigeonhole principle, two of these vertices have the same
label.  We claim that in this situation Builder can make a vertex label go up
in $Q_k$.

Builder picks two vertices $x$ and $y$ having the same label, with $x$ before
$y$.  Since $x$ and $y$ have the same label, Lemma \ref{eq:star} guarantees
that $y$ does not follow $x$.  Builder plays edges to make $y$ follow $x$.
Since labels that change can only move up, Lemma \ref{eq:star} implies that
playing edges to make $y$ follow $x$ causes the label of $y$ to increase in
$Q_k$.

In order to make $y$ follow $x$, we consider an instance of $U(\{x\})$.  For
each leaf in $U(\{x\})$, the associated edge $Z$ ends with $x$.  By
Lemma~\ref{followtree}, $y$ follows $x$ if $Z^+\cup\{y\}$ is an edge for each
such edge $Z$.  Builder plays all such $k$-sets that are not already edges. 

The number of edges played by Builder to make $y$ follow $x$ is at most the
number of leaves in $U(\{x\})$.  Since the children in $U(\{x\})$ of each label
in $Q_j$ form an antichain in $Q_{j-1}$, the number of leaves is bounded
by $\PE i1{k-1} a_i$, where $a_i$ is the maximum size of an antichain in $Q_i$.

As long as no monotone tight path with $m$ edges is created, the labels of the
$|Q_k|-k+1$ vertices we are considering can rise at most $|Q_{k-1}|-k$ times
without reaching $\Lmb_k$, since $\Lmb_k$ is the full downset of size
$|Q_{k-1}|$ in $Q_{k-1}$, and each of these labels initially is the unique
downset of size $k-1$.  Hence
$$1+[(|Q_k|-k+1)(|Q_{k-1}|-k)+1]\PE i1{k-1}a_i$$
moves suffice for Builder to finish the game.  Thus
$\olsrt(\prk)\le |Q_k|\cdot |Q_{k-1}|\PE i1{k-1}a_i$.

The remainder of the proof, obtaining an upper bound on $\olsrt(\prk)$ in terms
of $|Q_k|$ alone, is purely numerical.  Consider any small positive constant
$\epsilon$.  We seek
\begin{equation}\label{eq:Qbounds}
|Q_{2}| a_2 a_1 \le (\lg|Q_3|)^{2+\FR1{t-1}+\eps}
\qquad\text{and}\qquad
|Q_{k-1}|\PE i1{k-1}a_i \le (\lg|Q_{k}|)^{2+\eps}
\quad
\text{(for $k\ge 4$)}.
\end{equation}
We will find positive constants $t_0$ and $m_0$ in terms of $\epsilon$ such
that \eqref{eq:Qbounds} holds when $tm\ge t_0m_0$.

Let $q_i=|Q_i|$.  The rank of an element of $Q_i$ is its size as a downset in
$Q_{i-1}$; hence $Q_i$ has $|Q_{i-1}|+1$ ranks.  Since the minimal and maximal
elements are unique, $Q_i$ has a decomposition into the fewest chains such that
no chain meets all ranks.  Dilworth's Theorem~\cite{Dil} and the pigeonhole
principle then yield $a_i\ge q_i/q_{i-1}$, and hence $a_i\le q_i\le a_iq_{i-1}$.
Since the subsets of a largest antichain in $Q_{i}$ generate distinct downsets,
$q_{i+1}\ge 2^{a_i}$, so $a_i\le\lg q_{i+1}$.  To bound $q_{k-1}\PE i1{k-1}a_i$
in terms of $q_k$, we need $q_i$ to grow rapidly with $i$.  Already we have
$q_{i+1}\ge q_i/q_{i-1}$, but we need better.

Consider first $k=3$.  The computation we use to prove the first part of
\eqref{eq:Qbounds} is
$$
q_2a_2a_1 = tm^t a_2 \le a_2\left(\frac{m^{t-1}}{2\sqrt t}\right)^{t/(t-1)+\eps}
\le (\lg q_3)^{2+\FR1{t-1}+\eps}.
$$
The first step is from $a_1=t$ and $q_2=m^t$.  For the rightmost inequality, we
noted $a_2\le \lg q_3$ above, and Theorem~\ref{MSthm} gives
$m^{t-1}/2\sqrt t\le\lg q_3$.  The middle inequality reduces to
$t(2\sqrt t)^{t/(t-1)+\epsilon}\le m^{\epsilon(t-1)}$.
When $m\ge 4^{1+2/\epsilon}$, this holds for $t\ge2$.
When $(t-1)/\lg t \ge .5+2/\epsilon$, it holds for $m\ge2$.
Hence if we let $m_0= 4^{1+2/\epsilon}$ and let $t_0$ be the solution to
$(t-1)/\lg t=.5+2/\epsilon$, the inequality will hold whenever
$tm\ge t_0m_0$, since that yields $t\ge t_0$ or $m\ge m_0$ when $t,m\ge2$.


In order to prove the inequality of~\eqref{eq:Qbounds} for $k\ge4$, it suffices
to prove
\begin{equation}\label{stronger}
\PE i1{k-1} q_i \le (\lg q_{k})^{1+\eps/2},
\end{equation}
because $a_i\le q_i$ implies $q_{k-1}\PE i1{k-1} a_i< (\PE i1{k-1} q_i)^2$.
In the induction step, we use $1+\epsilon/2<4$ to weaken the induction
hypothesis, proving that $\PE i1{k-2} q_i \le (\lg q_{k-1})^4$ implies
\eqref{stronger}.  As a base step to start the induction, we prove the weaker
statement for $k=3$.  The computation for this is
$$q_2q_1=tm^{t+1} \le (m^{t-1}/t)^4 =(q_2/q_1)^4  \le a_2^4 \le (\lg q_3)^4,$$
in which the only step needing further explanation is
$tm^{t+1}\le (m^{t-1}/t)^4$, which simplifies to $(mt)^5\le m^{3t}$.
This holds when $t=2$ and $m\ge32$, or when $t\ge3$ and $m\ge4$.  It does not
hold when $t=m=3$, but the desired inequality $tm^{t+1}\le (q_2/q_1)^4$ does
hold then.  In any case, we obtain the desired inequality when $tm\ge64$.

For the induction step, we first use $q_i\le a_iq_{i-1}$, the induction
hypothesis, and the fact that $q_{k-1}$ (which exceeds $t(m-1)$) is
sufficiently large to compute
$$
\PE i1{k-1} q_i \le  a_{k-1} q_{k-2} \PE i1{k-2} q_i
< a_{k-1} \left(\PE i1{k-2} q_i\right)^2 \le  \lg q_k  (\lg q_{k-1})^8
\le  q_{k-1}^{\eps/3} \lg q_k.
$$
Now let $\beta=\PE i1{k-1} q_i$.  We weaken
$\beta\le q_{k-1}^{\epsilon/3}\lg q_k$ to $\beta\le\beta^{\epsilon/3}\lg q_k$.
Rearranging to a bound on $\beta$ now yields
$\beta\le (\lg q_k)^{1/(1-\eps/3)}\le (\lg q_k)^{1+\epsilon/2}$,
which completes the proof of \eqref{stronger} and the theorem.
\end{proof}

The argument for the lower bound, presented next, is easier.

\begin{thm}\label{lowerbd}
With $r>k$ and $m=r-k+1$, we have $\olsrt(\prk)\ge |Q_k|/(k\lg |Q_k|)$.
\end{thm}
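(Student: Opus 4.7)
The plan is to give an on-line Painter strategy that generalizes the $k=2$ strategy of Theorem~\ref{k=2} by exploiting the recursive structure $Q_j=J(Q_{j-1})$ that underlies the definition of $Q_k$. Painter will assign each newly-introduced real vertex a label from $Q_k$, maintaining the invariant that the labels form an order-preserving sequence with respect to the real vertex order (using a fixed linear extension of $Q_k$). When Builder plays on a new vertex that falls between two already-labeled ones, Painter picks a fresh label strictly between those assigned to the neighbors in the linear extension. Using an order-maintenance-style allocation that reserves $O(\lg|Q_k|)$ label-slack per item, Painter can accommodate at least $|Q_k|/\lg|Q_k|$ vertices under any adversarial insertion sequence.

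For the coloring, Painter uses a recursive rule that dualizes the construction of the $g_j$ labels from Definition~\ref{labelfcn}. Given an edge $e=\{v_1,\ldots,v_k\}$ with current labels $\alpha(v_1),\ldots,\alpha(v_k)\in Q_k$, Painter iteratively peels off downset levels: at each level $Q_j$ (starting from $Q_k$), she uses non-dominance between the labels to select a witness in $Q_{j-1}$, until arriving at an element of $Q_1$ whose chain index specifies the color. The key invariant, analogous to Lemma~\ref{eq:star}, is that along any monochromatic tight copy of $\prk$ in color $c$, the recursively extracted sequence in $Q_1$'s $c$-th chain is strictly increasing. Since that chain contains only $m-1$ elements, no monochromatic $\prk$ with $m$ edges can arise while Painter maintains the labeling.

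Since Painter supports at least $|Q_k|/\lg|Q_k|$ labeled vertices and each edge touches $k$ of them, Painter survives at least $|Q_k|/(k\lg|Q_k|)$ edges before being forced to create a monochromatic $\prk$, which gives the claimed lower bound. Note that the remarks following Theorem~\ref{MSthm} in the paper already observed that with $|Q_k|$ vertices available in the off-line or left-to-right setting, the same strategy yields $R_t(\prk)>|Q_k|$ and a $|Q_k|/k$ lower bound against the restricted Builder of~\cite{FPSS,CFS}; the online setting is strictly harder by an $O(\lg|Q_k|)$ factor coming from the insertion game.

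The main obstacle is twofold. First, the order-maintenance argument supporting $|Q_k|/\lg|Q_k|$ on-line insertions with only $|Q_k|$ available labels requires careful allocation of label-space slack, essentially matching the classical list-labeling trade-off; this is where the logarithmic loss enters. Second, defining the recursive coloring precisely and verifying that its invariant holds under arbitrary play demands an induction on $k$ that mirrors (but dualizes) the proof of Lemma~\ref{eq:star}, with additional bookkeeping at the base case to link the recursively extracted element of $Q_1$ to the actual length of a monochromatic tight path ending at the newly played edge.
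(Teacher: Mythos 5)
Your coloring rule (a recursive peel-off of downset levels that dualizes $g_j$) is close in spirit to the paper's definition of $f_j$ and the resulting invariant that heights in $Q_1$ strictly increase along a monochromatic tight path, so that part is on the right track even if under-specified. However, the vertex-labeling step has a genuine gap that breaks the whole argument. You propose to label vertices from $Q_k$ via a fixed linear extension, inserting ``a fresh label strictly between those assigned to the neighbors,'' and you appeal to order-maintenance/list-labeling to support $|Q_k|/\lg|Q_k|$ insertions. This is false: in the on-line setting Painter can never relabel, because each label permanently determines the colors of edges already played. With a total order on a label space of size $N$ and no relabeling, a bisection adversary (Builder repeatedly introduces a vertex between the two vertices whose labels currently bound the smallest gap in the linear extension) forces the available gap to shrink by at least half each round, so Painter runs out of labels after only $O(\lg N)$ insertions, not $N/\lg N$. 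Classical order-maintenance structures achieve $O(\lg N)$ slack per item only by amortized \emph{relabeling}, which is unavailable here. So your strategy survives only $O(\lg|Q_k|)$ rounds, far below the claimed bound.

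The paper sidesteps this completely by never using a linear extension at all: Painter fixes a maximum antichain $A$ of $Q_k$ and assigns arbitrary distinct elements of $A$ to vertices as they appear. Because the labels are pairwise incomparable, the required property $f_1(\{x\})\ngeq f_1(\{y\})$ for $x$ to the left of $y$ holds automatically, regardless of the insertion order. The logarithmic loss in the bound then comes solely from the combinatorial estimate on $|A|$, not from any on-line insertion difficulty. You should replace the list-labeling allocation with the antichain device; once you do, the rest of your recursive coloring plan can be made to match the paper's inductive construction of $f_{j+1}$ from $f_j$ and its proof that $f_j(Y^-)\ngeq f_j(Y^+)$ propagates upward.
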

\begin{proof}
With $\VEC Q1k$ defined as before, we give a strategy for Painter.  Painter
assigns labels to all $j$-sets of vertices that have been played, for
$1\le j\le k$; these labels remain unchanged throughout the game.  The label
$f_j(Y)$ assigned to a $j$-set $Y$ is in $Q_{k-j+1}$.  Since the label of 
a $k$-set is in $Q_1$, it specifies the color to be used on the set if
Builder plays it as an edge.

Let $A$ be a maximum-sized antichain in $Q_k$.  We have noted that 
$|A|\ge |Q_k|/\lg|Q_k|$.
When Builder uses new vertices, Painter gives them distinct unused elements of
$A$ as labels.  Painter will use these labels to avoid making a monochromatic
monotone copy of $\prk$.  Hence Painter can survive for at least $|A|/k$ edges.

In defining labels, the property we will need is that if $Y_1$ and $Y_2$ are
$j$-sets such that $Y_1^+=Y_2^-$ (or equivalently that $Y_1=Y^-$ and $Y_2=Y^+$
for some $(j+1)$-set $Y$), then $f_j(Y_1)\ngeq f_j(Y_2)$.  For $j=1$, the
labels of vertices are chosen as incomparable elements in $Q_k$, so this
holds by construction no matter what order Builder uses to introduce vertices.

For $1\le j\le k-1$, we define $f_{j+1}$ from $f_{j}$ (Builder defined $g_j$
from $g_{j+1}$ in the upper bound).  Given a $(j+1)$-set $Y$, consider $Y^-$
and $Y^+$.  Since $(Y^-)^+=(Y^+)^-$, we are given $f_j$ defined so that
$f_j(Y^-)\ngeq f_j(Y^+)$.  Hence some element of $f_j(Y^+)$ is not in
$f_j(Y^-)$ (as downsets in $Q_{k-j}$).  Painter chooses any such element as the
label $f_{j+1}(Y)$.

Now consider $(j+1)$-sets $Y_1$ and $Y_2$ with $Y_1^+=Y_2^-$.
Both $f_j(Y_2^+)$ and $f_j(Y_2^-)$ are downsets in $Q_{k-j}$, and we chose
$f_{j+1}(Y_2)\in f_j(Y_2^+)-f_j(Y_2^-)$.  Hence the element $f_{j+1}(Y_2)$ is
not below anything in the downset $f_j(Y_2^-)$, including
$f_{j+1}(Y_1)\in f_j(Y_1^+)=f_j(Y_2^-)$.
This means $f_{j+1}(Y_1)\ngeq f_{j+1}(Y_2)$, as needed for the process to
continue.


We have now defined labels for all sets of at most $k$ vertices.
The labels of $k$-sets lie in $Q_1$ and hence are colors with heights.  When
Builder plays a $k$-set, the color used by Painter is the color in its label.
When edges $Y_1$ and $Y_2$ are consecutive in a monotone tight path in color
$i$, so $Y_1^+=Y_2^-$, the property $f_k(Y_1)\ngeq f_k(Y_2)$ implies that the
height of the label in $Q_1$ strictly increases.  Since the chains in $Q_1$
have only $m-1$ elements, no monochromatic monotone copy of $\prk$ occurs.
\end{proof}

%
%

We restrict vertex labels to an antichain in $Q_k$ because Builder has the
power to introduce new vertices between old vertices, and when vertex $x$ is to
the left of vertex $y$ Painter needs to find an element in the label of $y$
that is not in the label of $x$.  If the vertices were known in advance,
then the vertex Ramsey result $R_t(\prk)=|Q_k|+1$ would already allow Painter
to survive $|Q_k|/k$ edges in the on-line game.  On the other hand, our
arguments also yield this result.

\begin{cor}
[Moshkovitz and Shapira~\cite{MS}]
$R_t(\prk)=|Q_k|+1$.
\end{cor}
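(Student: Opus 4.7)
The plan is to derive both bounds by reusing the label constructions of Theorems~\ref{lowerbd} and~\ref{main} in the off-line regime. For the lower bound $R_t(\prk)\ge |Q_k|+1$, I adapt the Painter strategy of Theorem~\ref{lowerbd} to the setting where all $|Q_k|$ vertices are known in advance. Rather than restricting vertex labels to an antichain, I bijectively assign the elements of $Q_k$ to the vertices along a linear extension: if vertex $x$ precedes vertex $y$, then $f_1(y)\not< f_1(x)$ in $Q_k$, so $f_1(x)\not\ge f_1(y)$. The inductive construction of $f_j$ on $j$-sets for $2\le j\le k$ goes through unchanged, propagating the property $f_j(Y_1)\not\ge f_j(Y_2)$ whenever $Y_1^+=Y_2^-$. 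Coloring each $k$-set by its $f_k$-label forces the height in $Q_1$ to strictly increase along any monochromatic monotone tight path, so no monochromatic $\prk$ appears on $|Q_k|$ vertices.

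For the upper bound $R_t(\prk)\le |Q_k|+1$, I consider an arbitrary $t$-coloring of $K_n^{(k)}$ with $n=|Q_k|+1$, assume for contradiction no monochromatic $\prk$, and define $g_j$ on all $j$-sets as in Definition~\ref{labelfcn}; every $k$-set is an edge, so $g_k$ is defined everywhere and the absence of $\prk$ keeps it inside $Q_1$. The key new ingredient is that in the complete hypergraph the \emph{follows} relation of Definition~\ref{follows} is automatic: for any $j$-sets $Y_1,Y_2$ with $Y_1^+=Y_2^-$, the set $Y_2$ follows $Y_1$. I prove this by induction on $k-j$. The base $j=k$ is immediate from the definition. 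In the step, given a maximal element $w$ of $g_j(Y_1)$, maximality in the downset generated by the labels of the precursors of $Y_1$ forces $w=g_{j+1}(Z_1)$ for some precursor $Z_1$ of $Y_1$; setting $Z_2=Y_1\cup Y_2$ (a $(j+1)$-set since $Y_1^+=Y_2^-$), one checks $Z_1^+=Y_1=Z_2^-$, so the induction hypothesis delivers that $Z_2$ follows $Z_1$, which is exactly condition (B). When $Y_1$ has no precursors (first vertex $1$), $g_j(Y_1)$ is empty and condition (B) is vacuous.

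Applying Lemma~\ref{eq:star} at $j=1$, for any two vertices $x<y$ we have $g_1(x)\not\ge g_1(y)$, and in particular $g_1(x)\ne g_1(y)$. Thus the $n$ vertex labels are pairwise distinct elements of $Q_k$, contradicting $|Q_k|=n-1$. The main conceptual step is the inductive trivialization of \emph{follows} in the complete hypergraph; the delicate point is that a maximal element of a downset generated by a set $S$ must itself belong to $S$, which is what lets us pin down a precursor carrying the required label and feed it into the induction hypothesis.
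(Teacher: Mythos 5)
Your proposal is correct and follows essentially the same route as the paper's own proof: for the lower bound you reuse the Painter labeling of Theorem~\ref{lowerbd} with all of $Q_k$ assigned to the known vertices along a linear extension, and for the upper bound you observe that in the complete ordered hypergraph the ``follows'' relation holds automatically, so Lemma~\ref{eq:star} forces the vertex labels $g_1$ to be pairwise distinct, which is impossible on $|Q_k|+1$ vertices. The only cosmetic differences are that you verify the automatic-``follows'' claim by direct induction on Definition~\ref{follows} (the paper invokes Lemma~\ref{followtree}) and you finish by pigeonhole on all $|Q_k|+1$ labels rather than via $\Lmb_k$ and a final edge through the last vertex; just note that your blanket claim at $j=1$ should be restricted to vertices $x<y$ (for $y<x$ one has $(Y_1\cup Y_2)^-=\{y\}\ne Y_1$, so condition (B) can fail), which is the only case your argument uses.
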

\begin{proof}
When all vertices are known in advance, or when Builder is constrained to add
vertices only at the high (i.e., right) end (as in the game studied by Fox et
al.~\cite{FPSS}), Painter can use all of $Q_k$ as vertex labels, assigning them
according to a linear extension, level by level.  The initialization
$f_1(\{x\})\ngeq f_1(\{y\})$ for any vertices $x$ and $y$ with $x$ before $y$
then holds.  The rest of the proof is exactly the same, yielding a lower bound
of $|Q_k|/k$ for their game and requiring more than $|Q_k|$ vertices to be
played to force a monochromatic copy of $\prk$.

Since the off-line situation is weaker for Builder, we must work harder for the
upper bound.  All the edges of $\CHnk$ will be played, with $n=|Q_k|+1$.
Painter knows that.  If there is a $t$-coloring that avoids $\prk$, then
Painter can prepare to play that coloring, no matter in what order we add
the edges.  We can allow the labels to be defined as in the on-line game
as we add edges.

Initially, the labels are as at the start of the on-line game, as described
in Remark~\ref{bottom}.  We imagine playing all the edges on the first
$|Q_k|$ vertices first.  If $\Lmb_k$ appears as a label on a vertex, then
as observed in the proof of Theorem~\ref{main} there is an edge using the
last vertex that when added forces $\prk$.  If $\Lmb_k$ does not appear,
then among the first $|Q_k|$ vertices there are vertices $x$ and $y$ (with $y$
later than $x$) having the same labels.  Lemma~\ref{eq:star} as edges are
processed maintains that two vertices cannot have the same label when one
follows the other.  Lemma~\ref{followtree} guarantees that when all the edges
are processed, all the edges that need to be played to make $y$ follow $x$ have
been played.  Hence such $x$ and $y$ cannot exist, and $\Lmb_k$ must occur
as a label on a vertex.
\end{proof}

 
Generalizing these results to $\ell$-loose $k$-uniform monotone paths is
straightforward.  The off-line value $R_t(\prkl)$ was obtained by Cox and
Stolee~\cite{CS}.  The key point is that edges whose last vertices differ by
less than $\ell$ cannot belong to a common $\ell$-loose $k$-uniform monotone
path.  Recall that explicit bounds on $|Q_h| \cdot |Q_{h-1}| \PE i1{h-1}a_i$ in
terms of $|Q_h|$ and $|Q_{h-1}|$ are given in Theorem~\ref{main}.

\begin{thm}
Given $k,\ell,m,t\in\NN$ with $t,m\ge2$ and $\ell\in[k]$, let $r=k+\ell(m-1)$.
Also let $h=\CL{k/\ell}$ and $s=k-(h-1)\ell$.  With $Q_j$ defined in terms of
$k,r,t$ as in the introduction, $R_t(\prkl)=\ell|Q_h|+s$.
Moreover, if $\ell<k$ then
$|Q_h|/k\lg|Q_h|\le\olsrt(\prkl)\le |Q_h|\cdot |Q_{h-1}| \PE i1{h-1}a_i$,
where $a_i$ denotes the size of the largest antichain in $Q_i$, while if $\ell=k$ then $|Q_1|/k\lg |Q_1|\le\olsrt(\prkl)\le |Q_1|+1$.
\end{thm}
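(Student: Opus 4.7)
The plan is to reduce the $\ell$-loose $k$-uniform problem to the tight $h$-uniform problem (with $h=\CL{k/\ell}$) by grouping vertices into consecutive blocks of size $\ell$.  The key observation isolated in the excerpt is that two edges of an $\ell$-loose $k$-uniform monotone path must have last vertices differing by a positive multiple of $\ell$.  Partitioning $[n]$ into consecutive blocks of size $\ell$, with a possibly smaller final block of size $s = k-(h-1)\ell$, each block-aligned $\ell$-loose $k$-uniform edge consists of the full contents of $h-1$ consecutive blocks together with the first $s$ vertices of an $h$-th block.  Hence block-aligned edges correspond bijectively to consecutive $h$-windows of blocks, and monochromatic $\ell$-loose $k$-uniform monotone paths in $[n]$ correspond to monochromatic tight $h$-uniform monotone paths on blocks.

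For $R_t(\prkl)=\ell|Q_h|+s$, the upper bound takes $n=\ell|Q_h|+s$ vertices organized as $|Q_h|+1$ blocks (with the last of size $s$).  Any $t$-coloring of the $k$-edges of $[n]$, when restricted to block-aligned edges, yields a $t$-coloring of the tight $h$-uniform complete hypergraph on $|Q_h|+1$ blocks, and the preceding Corollary (applied with $h$ in place of $k$) produces the required monochromatic path.  For the lower bound, on $\ell|Q_h|+s-1$ vertices only $|Q_h|$ blocks are usable as the terminal block of an edge (the partial block is one vertex short of size $s$); Painter then labels these $|Q_h|$ blocks via a linear extension of $Q_h$ and extends the labels $f_j$ of Theorem~\ref{lowerbd} to color all $k$-subsets consistently, yielding a coloring with no monochromatic $\prkl$.

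The on-line bounds for $\ell<k$ follow by the same block lifting.  Builder plays on $\ell|Q_h|+s$ vertices viewed as $|Q_h|+1$ blocks and introduces only block-aligned edges, imitating the strategy of Theorem~\ref{main} with blocks in place of vertices and $h$ in place of $k$; each round in which Builder makes one block follow another (in the sense of Definition~\ref{follows} at the block level) introduces at most $\PE i1{h-1}a_i$ block-aligned $k$-edges by Lemma~\ref{followtree}, for a total bound of $|Q_h|\cdot|Q_{h-1}|\PE i1{h-1}a_i$.  For the lower bound, Painter assigns block labels from a maximum antichain $A\subseteq Q_h$ with $|A|\ge|Q_h|/\lg|Q_h|$, colors $k$-edges via the recursive labels of Theorem~\ref{lowerbd} applied at the block level, and survives at least $|A|/k\ge|Q_h|/(k\lg|Q_h|)$ rounds, since each edge uses at most $k$ new vertices.

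The matching case $\ell=k$ (so $h=1$ and $s=k$) is essentially pigeonhole.  The upper bound $|Q_1|+1=(m-1)t+1$ is achieved by Builder playing pairwise disjoint $k$-edges: after $(m-1)t+1$ rounds, some color has $m$ of them, forming the desired matching.  For the lower bound, Painter's trivial strategy of assigning each edge a color that has previously been used at most $m-2$ times ensures no color accumulates $m$ edges (in particular, no monochromatic matching of $m$), giving the stronger bound $|Q_1|=(m-1)t$, which exceeds $|Q_1|/(k\lg|Q_1|)$.  The main technical obstacle throughout is ensuring that the block reduction is faithful for \emph{all} starting offsets of a potential $\ell$-loose path: Painter's labeling at the block level must be propagated to all $k$-subsets (not only block-aligned ones), which is done by defining $f_j$ on arbitrary $j$-sets via the blocks they meet and verifying that the crucial property $f_j(Y_1)\ngeq f_j(Y_2)$ when $Y_1^+=Y_2^-$ carries through.
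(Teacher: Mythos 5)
Your two upper-bound arguments are essentially correct and take a genuinely different (and cleaner) route than the paper: instead of re-deriving the whole labelling machinery for ``basic'' sets, you lift the already-proved tight $h$-uniform results to the loose setting by playing only block-aligned edges, and the lift of a monochromatic tight $h$-path on blocks is indeed an ordered copy of $\prkl$ (the consecutive lifted edges overlap in exactly the last/first $k-\ell$ vertices). This buys a shorter proof of $R_t(\prkl)\le\ell|Q_h|+s$ and of the Builder bound $\olsrt(\prkl)\le|Q_h|\cdot|Q_{h-1}|\PE i1{h-1}a_i$, at the cost of nothing, since Builder is free to restrict his own play. One small repair in the $\ell=k$ case: ``pairwise disjoint'' edges do not suffice, because $m$ disjoint same-colored edges may interleave in the vertex order and then do not form a copy of $\prkkk$; play consecutive intervals $[ik+1,(i+1)k]$, as the paper does, so that any $m$ edges in one color are automatically ordered.

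The lower bounds, however, have a genuine gap, and it is exactly the point you defer to the last paragraph. A copy of $\prkl$ that Painter must avoid need not be block-aligned, and in the on-line game there is no block structure at all: Builder may insert new vertices between existing ones, so if Painter gives one antichain element of $A$ to all $\ell$ vertices of a ``block,'' two vertices carrying the same label can later end up at distance at least $\ell$, and then for $s$-sets $Y_1=Y^-$ and $Y_2=Y^+$ of a common $(s+\ell)$-set the required property $f_1(Y_1)\ngeq f_1(Y_2)$ fails (equal labels are comparable). This is precisely why the paper's on-line Painter names each vertex with a \emph{distinct} element of $A$ and only claims $|A|/k$, with no factor $\ell$. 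Off-line, your count is also not justified as stated: the assertion that only $|Q_h|$ blocks are ``usable as the terminal block of an edge'' is true only for block-aligned edges, whereas an adversarial copy of $\prkl$ may have edges ending inside the trailing partial block, whose $s$-sets receive no label in your scheme. The paper's fix is not ``labels of arbitrary $j$-sets via the blocks they meet'': it labels only vertex sets of size $k-(h-j)\ell$, defines $f_{j+1}(Y)$ recursively as an element of $f_j(Y^+)\setminus f_j(Y^-)$ where $Y^{\pm}$ delete the first/last $\ell$ vertices (ignoring blocks), names the vertices along a linear extension of $Q_h$ with each name repeated on $\ell$ \emph{consecutive} vertices while leaving the \emph{first} $s-1$ vertices unnamed, and justifies the repetition by the observation that two sets whose maxima differ by fewer than $\ell$ positions can never occur as $Y^-$ and $Y^+$ of a common set, i.e., never lie in a common copy of $\prkl$. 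Carrying out that verification (and its on-line variant with distinct antichain names) is the substantive content of the lower bound, and it is missing from your proposal.
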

\begin{proof}
(Sketch) The value $\ell$ is the {\it shift}; in an $\ell$-loose
$k$-uniform monotone path, it is the number of vertices at the beginning of an
edge that are not included in the next edge.

Let $Y^-$ and $Y^+$ be obtained from a set $Y$ with $|Y|>\ell$ by deleting the
last $\ell$ and the first $\ell$ elements, respectively.  Note that $s$ is the
unique member of $[\ell]$ congruent to $k$ modulo $\ell$.  Given $j$ with
$1\le j\le h$, let $j'=k-(h-j)\ell$; the values of $j'$
are $\{i\in[k]\st i\equiv k\mod \ell\}$.

\medskip
{\bf Lower Bound} {\it (Painter strategy):}
Painter will assign labels to subsets of the vertices whose size is congruent
to $k$ modulo $\ell$.  In particular, the label $f_j(Y)$ will be in $Q_{h-j+1}$
for each $j'$-set $Y$ of vertices.  As noted earlier, in $Q_h$ there is an
antichain of size at least $|Q_h|/|Q_{h-1}|$.  Painter initially fixes a
largest antichain $A$ in $Q_h$ and uses distinct elements of $A$ to name the
vertices as they are introduced by Builder; we do not call these ``labels'' in
the sense used earlier.  The smallest sets given labels by Painter have size
$s$.  For each $s$-set $Y$, let $f_1(Y)$ be the element of $A$ that Painter
used to name its rightmost vertex.


For $1\le j\le h$, again we need $f_j(Y_1)\not\ge f_j(Y_2)$ for $j'$-sets $Y_1$
and $Y_2$ such that there exists $Y$ with $Y_1=Y^-$ and $Y_2=Y^+$.  Note that
such a set $Y$ may be introduced after later moves by Builder's introduction
of new vertices.  However, if $Y_1$ and $Y_2$ have the same highest vertex,
then this can never occur, and Painter can have the same label on $Y_1$ and
$Y_2$.

For $1\le j\le h-1$, define $f_{j+1}$ from $f_j$ by letting $f_{j+1}(Y)$ be any
element of $f_j(Y^+)$ not in $f_j(Y^-)$.  The inductive proof of the needed
property $f_j(Y_1)\not\ge f_j(Y_2)$ is the same as in Theorem~\ref{lowerbd}.
The Painter strategy is as defined there: the resulting labels of $k$-sets
under $f_k$ lie in $Q_1$, and the color used by Painter on an edge played by
Builder is the color of the chain containing its label.  Since heights must
strictly increase along $\ell$-loose $k$-uniform paths, no monochromatic copy
of $\prkl$ occurs.  Painter can survive any $a_h/k$ edges, where $a_h=|A|$.

In a restricted version of the game where Builder must add vertices in order
from low to high, or where the vertices are specified in advance, Painter can
use all elements of $Q_h$ as vertex names (in the order of a linear extension
of $Q_h$).  Furthermore, Painter can then use the same name on $\ell$
consecutive vertices, since edges whose highest vertices differ by less than
$\ell$ cannot belong to the same copy of $\prkl$, and no vertices will be
inserted between two already having names.  In addition, the first $s-1$
vertices receive no names from $Q_h$, since the smallest sets needing labels
have size $s$.  Again the process proceeds: $s$-sets receive as label the
element of $Q_h$ assigned to their highest vertex.  Note that if
$|\max Y_2 - \max Y_1| < \ell$, then $Y_1$ and $Y_2$ can never be extended
leftward to edges in the same copy of $\prkl$.  In this way, Painter can
survive $\ell|Q_h|+s-1$ vertices.  Hence $R_t(\prkl)\ge\ell|Q_h|+s$,
as in~\cite{CS}.

\medskip
{\bf Upper Bound} {\it (Builder strategy):} Builder uses
$\ell|Q_h|+s$ vertices, assigning labels to sets whose size is congruent to $k$
modulo $\ell$, down to size $s$.  Actually, Builder assigns labels only to sets
whose last $s$ vertices are consecutive, called {\it basic} sets; Builder also
plays only basic edges.  Henceforth consider only basic sets.  Note that there
are $\ell|Q_h|+1$ basic sets of size $s$.

Builder assigns a label in $Q_1$ to edges and a label in $Q_{h-j+1}$ to the
sets of size $j'$ for $h> j\ge 1$ (note that $j'=j$ when $\ell=1$).  For an
edge $Y$ with color $i$ in $G$, the label $g_h(Y)$ is the element of height
$p$ on the $i$th chain in $Q_1$, where $p$ is the number of edges in the
longest $\ell$-loose $k$-uniform monotone path with last edge $Y$ in the
current colored hypergraph.  For $h>j\ge1$, the {\it precursors} of a $j'$-set
$Y$ are the $(j'+\ell)$-sets obtained by adding $\ell$ elements to $Y$ that are
smaller than the least element of $Y$; that is, the precursors are the sets $Z$
such that $Z^+=Y$.

With these generalizations of earlier definitions, the definitions of $g_j$ for
$1\le j<h$ and the relation of ``follows'' are the same as in
Definitions~\ref{labelfcn} and \ref{follows}.
In particular, note that if $Y_2$ follows $Y_1$, then the rightmost element of
$Y_2$ must be at least $\ell$ positions to the right of the rightmost element
of $Y_1$.  The statement and proof of Lemma~\ref{eq:star} are the same, except
that $g_k$ and $Q_{k-j+1}$ generalize to $g_h$ and $Q_{h-j+1}$, and $\CH{[n]}j$
becomes $\CH{[n]}{j'}$.  In Definition~\ref{UY} and Lemma~\ref{followtree} we
generalize $j$-set and $(j+1)$-set to basic $j'$-set and basic $(j'+\ell)$-set,
and again $k$ generalizes to $h$ in various subscripts.

Now Remark~\ref{bottom} and Theorem~\ref{main} also generalize naturally to
yield
$\olsrt(\prkl)\le |Q_h| \cdot |Q_{h-1}| \PE i1{h-1}a_i$ for $\ell<k$
(or equivalently $h\ge2$).  
Note that the labels $\bz_h^0,\ldots,\bz_h^{h-2}$ of the chain at the bottom of
$Q_h$ are assigned to the first $(h-1)\ell$ basic sets of size $s$, where each
label is used on $\ell$ consecutive sets.  (Since each basic $s$-set
is an interval of $s$ consecutive vertices, these sets form a order with the
next basic $s$-set shifting by one from the previous one.) 
For $0\le i\le (h-1)\ell -1$, the set $[i+1,i+s] \in \binom{[n]}{s}$ is
assigned label $\bz_h^{\FL{i/\ell}}$.  These labels never change, since no edge
can be played ending at one of these sets.


The labels of the basic $s$-sets after the first $(h-2)\ell$ are confined to
$|Q_h|-h+1$ labels in $Q_h$ (as long as none of them becomes $\Lmb_h$).
Among those, Builder will focus on basic $s$-sets of the form
$[i\ell+1,i\ell+s]$ for $h-2 \le i\le |Q_h|-1$, which we call {\it restricted}
basic $s$-sets.  Since there are $|Q_h|-h+2$ of these sets, when Builder is
ready to move the pigeonhole principle guarantees that some label in $Q_h$ is
assigned to at least two restricted basic $s$-sets.  This guarantees the
existence of two basic $s$-sets $X$ and $Y$ with the same label whose rightmost
vertices differ by at least $\ell$.  By the generalization of
Lemma~\ref{eq:star}, $Y$ does not follow $X$.  Builder can then play edges as
guaranteed by the generalization of Lemma~\ref{followtree} to make $Y$ follow
$X$, which as in Theorem~\ref{main} makes the label of $Y$ go up.  A label can
increase at most $|Q_{h-1}|-h$ times before reaching $\Lmb_h$.


Hence Builder can play to force an $s$-set $Z$ with label $\Lmb_h$ ending
before the last $\ell$ vertices.  As in Theorem~\ref{main}, some $(k-\ell)$-set
$Y$ ending with $Z$ will then have label $(m-1,\dots,m-1)$, the top element of 
$Q_2$.  By playing the $k$-set consisting of $Y$ and the last $\ell$ vertices,
Builder wins.


Since in fact the label of the leftmost restricted basic $s$-set never
changes, the number of edges played is at most
\[
1+[ (|Q_h|-h+1)(|Q_{h-1}|-h) +1]\PE i1{h-1}a_i,
\]
which for $h\ge2$ is at most
$|Q_h| \cdot |Q_{h-1}| \PE i1{h-1}a_i$.
Note, however, that since Builder used only $\ell|Q_h|+s$ vertices, we have
$R_t(\prkl)=\ell|Q_h|+s$.
In the case $h=1$ (that is, $\ell=s=k$), Builder simply plays the basic
edges (intervals) $[ik+1,(i+1)k]$ for $0\le i\le |Q_1|$.  Since
$[i'k+1,(i'+1)k]$ follows $[ik+1,(i+1)k]$ whenever $i<i'$, Painter is forced
to use distinct labels on the edges and loses.  This gives the desired upper
bounds on $R_t(\prkl)$ and $\olsrt(\prkl)$ for $\ell=k$.
\end{proof}

\section{Directed Graphs}

The ordered Ramsey problem can be described using directed graphs and
hypergraphs.  An orientation of an edge is a permutation of its vertices.
An ordered hypergraph can be viewed as a directed hypergraph in which the
orientation of each edge is the permutation inherited from the vertex ordering.
In particular, an ordered tight path is a directed hypergraph in which the
edges are the $k$-sets of consecutive vertices, oriented in increasing order in
each edge.  In a general $k$-uniform directed hypergraph, $k$-sets may appear
up to $k!$ times, once with each orientation.

When Builder has the power to play edges of a general directed hypergraph
in seeking to force a monochromatic directed tight path, Painter can follow
a strategy like that above, using an antichain in $Q_k$ for vertex labels.
All oriented $j$-tuples must be labeled, for $1\le j\le k$, so the lower bound
will be $|Q_k|/(k!\lg |Q_k|)$.

Let us consider this problem in the off-line setting for $k=2$.
Hence we are seeking the size Ramsey number of the directed path $P_{m+1}$
in the model where arbitrary host digraphs are allowed.  The trivial upper
bound is again $\CH{m^t+1}2$, achieved by playing increasing edges for all
pairs on $R_t(P_{m+1})$ vertices in the ordered setting.  For the off-line
model, Builder is weaker, and we obtain a better lower bound than for the
on-line game.

\begin{thm}
In the setting of directed graphs, $\srt(P_{m+1})\ge\CH{|B|+1}2$,
where $B$ is the family of elements in $M^t$ with sum $\FL{(m-1)t/2}$.
\end{thm}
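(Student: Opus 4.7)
The plan is to reduce the problem to a chromatic-number question for the underlying undirected graph of the Builder's digraph, and then invoke the elementary fact that graphs of chromatic number $k$ have at least $\binom{k}{2}$ edges. As in the proof of Theorem~\ref{k=2}, Painter attempts to label each vertex $v$ of the host digraph $H$ with a vector $a(v)\in B\subseteq M^t$, and then color each directed edge $(u,v)$ with a coordinate $i$ for which $a_i(v)>a_i(u)$. Since every $a(v)\in B$ has the same sum $\FL{(m-1)t/2}$, such a coordinate $i$ exists whenever $a(u)\neq a(v)$. Along any monochromatic directed path $v_0\to v_1\to\cdots\to v_p$ in color $i$, the sequence $a_i(v_0),\dots,a_i(v_p)$ is strictly increasing in $\{0,\dots,m-1\}$, forcing $p\leq m-1$ and hence no monochromatic copy of $P_{m+1}$.

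First I would observe that the only requirement for this strategy to succeed is that $a(u)\neq a(v)$ for every pair $\{u,v\}$ joined by at least one directed edge of $H$ (a possible self-loop contains no path of positive length and can be colored arbitrarily; a pair with both orientations present is colored independently in each direction, and neither choice affects the label-based argument). Equivalently, Painter succeeds whenever the underlying undirected simple graph $G$ of $H$ admits a proper vertex coloring with at most $|B|$ colors. Therefore if $H$ forces a monochromatic $P_{m+1}$ under every Painter strategy, we must have $\chi(G)\geq |B|+1$.

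Next I would invoke the standard fact that any graph $G$ with $\chi(G)\geq k$ has at least $\binom{k}{2}$ edges: a $k$-critical subgraph $G'\subseteq G$ has at least $k$ vertices and minimum degree at least $k-1$, so $|E(G')|\geq k(k-1)/2=\binom{k}{2}$. Applying this with $k=|B|+1$ yields $|E(G)|\geq\binom{|B|+1}{2}$, and since each undirected edge of $G$ corresponds to at least one directed edge of $H$ we have $|E(H)|\geq|E(G)|\geq\binom{|B|+1}{2}$, as claimed.

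The main thing to get right is the equivalence between a winning Painter labeling and a proper coloring of $G$ with $|B|$ colors; once that is set, the chromatic-number lower bound on edge count is routine. I expect no serious obstacle, just the need to be careful about multi-edges and loops and to verify that Painter's labeling can be chosen entirely in advance (which is possible in the off-line setting, where $H$ is known to Painter before coloring begins).
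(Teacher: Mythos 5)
Your proposal is correct and takes essentially the same approach as the paper: reduce Painter's survival to proper vertex coloring of the underlying undirected graph with colors drawn from the antichain $B$, then invoke a standard edge-count lower bound for graphs of given chromatic number. The only cosmetic difference is that the paper derives the edge count via degeneracy (fewer than $\binom{|B|+1}{2}$ edges $\Rightarrow$ $(|B|-1)$-degenerate $\Rightarrow$ $|B|$-colorable), while you use a $(|B|+1)$-critical subgraph with minimum degree at least $|B|$; these are equivalent one-line arguments.
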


\begin{proof}
A graph with fewer than $\binom{|B|+1}{2}$ edges is $(|B|-1)$-degenerate and
hence $|B|$-colorable.  Hence we may suppose that the underlying undirected
graph of the host digraph is $|B|$-colorable.  Painter specifies a proper
vertex coloring whose colors correspond to the elements of $B$.  Each vertex
$v$ has a label $a(v)\in B$, and adjacent vertices always have distinct labels.
As in Theorem~\ref{k=2}, Painter can choose for each (directed) edge $uv$ a
color $i$ such that $a_i(v)>a_i(u)$.  Again at every vertex $w$ the length of
any path in color $i$ reaching $w$ is at most $a_i(w)$, since the $i$th
coordinate strictly increases along paths whose edges have color $i$.
\end{proof}

The off-line size Ramsey problem for paths in digraphs (with $t=2$) was also
studied by Ben-Eliezer, Krivelevich, and Sudakov~\cite{BKS}.
They considered both when
Builder can present only oriented graphs (no $2$-cycles) and when Builder can
present any digraph, yielding size Ramsey numbers $S_{ori}$ and $S_{dir}$
respectively.  Note that $S_{dir}\le S_{ori}$ when the parameters are equal.

For the general digraph model, which we considered above,
the arguments of~\cite{BKS} yield the following bounds:
\[
\left(\frac{m+1}{3t-3}\right)^{2t-2} \le S_{dir} \le  4(m+1)^{2t-2}.
\]
Since they focus on constant $t$, they state the result as
$S_{dir} = \Theta(m^{2t-2})$.
Since
$|B|\ge \frac23 m^{t-1}/\sqrt t$, our lower bound strengthens theirs.

Their lower bound for $S_{ori}$ is higher than their upper bound for $S_{dir}$
(Bucic, Letzter, and Sudakov~\cite{BLS} improved their upper bound on
$S_{ori}$).
They prove
\[ 
C_1(t)
\frac{(m+1)^{2t-2}(\log(m+1))^{\frac{1}{t-1}}}{(\log\log(m+1))^{\frac{t+1}{t-1}}}
\le S_{ori} \le C_2 (m+1)^{2t-2}(\log(m+1))^2
\]
where $C_2$ is an absolute constant, but $C_1(t)$ depends on $t$.
They require
\[
C_1(t) < \frac{C^{1/(t-1)}}{8(2t-2)^{t-1}(16(t-1)^2)^t}
\]
for some absolute constant $C$.
Therefore, their lower bound is at most
\[
\frac{1}{ (2t)^{3t}}
\frac{(m+1)^{2t-2}(\log(m+1))^{\frac{1}{t-1}}}{(\log\log(m+1))^{\frac{t+1}{t-1}}},
\]
which remains smaller than ours when $t$ grows faster than $\sqrt{\log\log m}$.



\end{document}